\newcommand{\D}{\displaystyle }
\newcommand{\mbb}[1]{\mathbb{#1}}
\newcommand{\R}{\mathbb R}
\newcommand{\Z}{\mathbb Z}
\newcommand{\sH}{\mbb H}
\newcommand{\mbf}[1]{\boldsymbol{#1}}
\newcommand{\bk}{{\mbf k}}
\newcommand{\bh}{{\mbf h}}
\newcommand{\bl}{{\mbf l}}
\newcommand{\bx}{{\mbf x}}
\newcommand{\bu}{{\mbf u}}
\newcommand{\bv}{{\mbf v}}
\newcommand{\bw}{{\mbf w}}
\newcommand{\bphi}{{\mbf \phi}}
\newcommand{\bpsi}{{\mbf \psi}}
\newcommand{\bbeta}{{\mbf \beta}}
\newcommand{\f}{\mbf f}
\newcommand{\g}{\mbf g}
\newcommand{\mV}{\mathcal{V}}
\newcommand{\rd}{{\text{\rm d}}}
\newcommand{\bs}{\boldsymbol}
\newcommand{\per}{\textnormal{per}}
\newcommand{\be}{\begin{equation}}
\newcommand{\ee}{\end{equation}}
\newcommand{\cal}[1]{\mathcal#1}
\newcommand{\co}{\cal O}
\newcommand{\lra}{\longrightarrow}
\newcommand{\ra}{\rightarrow}
\newcommand{\mA}{\mathcal{A}}
\newcommand{\Lloc}{L_{\textnormal{loc}}}
\newcommand{\dt}[1]{{\D\frac{\text{d}#1}{\text{dt}}}}
\newcommand{\mB}{\mathcal{B}}
\newtheorem{theorem}{Theorem}[section]
\newtheorem{corollary}[theorem]{Corollary}
\newtheorem{lemma}[theorem]{Lemma}
\newtheorem{fact}[theorem]{Fact}
\theoremstyle{definition}
\newtheorem{definition}[theorem]{Definition}
\newtheorem{system}[theorem]{System}
\newtheorem{example}[theorem]{Example}
\newtheorem{problem}[theorem]{Problem}
\theoremstyle{remark}
\newtheorem{remark}[theorem]{Remark}
\numberwithin{equation}{section}
\newcommand{\comments}[1]{ {\color{blue} #1}}
\newcommand{\optionalproof}[1]{{\color{red} \begin{proof} {#1} \end{proof}}}
\newcommand{\verbose}[1]{{\color{blue} #1}}
\renewcommand{\comments}[1]{}
\renewcommand{\optionalproof}[1]{}
\renewcommand{\verbose}[1]{}
\newcommand{\W}{\mathbb{W}}
\newcommand{\Wmap}[2]{\W(#1, #2)}
\newcommand{\Proj}[2][N]{P_{#1} #2}
\newcommand{\Cost}{\mathcal{L}}
\newcommand{\A}{A}
\newcommand{\B}[2]{B\left(#1, #2\right)}
\renewcommand{\b}[3]{b\left(#1, #2, #3\right)}
\newcommand{\NSE}[2]{\frac{d}{dt} #1 + #2 \A #1 + \B{#1}{#1}}
\newcommand{\F}[1]{\widehat{#1}}
\newcommand{\conv}[2]{i\kappa_0 P_\sigma \sum_{\bh} (\bk \cdot \F{#1}(\bh)) \F{#2}(\bk-\bh)}
\newcommand{\avg}[1]{\left< #1 \right>_s^t}
\newcommand{\NSEFourier}[3][\conv]{\frac{d}{dt} \F{#2}(\bk) + #3 \kappa_0^2 |\bk|^2 \F{#2}(\bk) + #1{#2}{#2}}
\title[Determining viscosity from modal observations]{Determining the viscosity of the Navier-Stokes equations from observations of finitely many modes}
\author{Animikh Biswas \and Joshua Hudson}
\begin{document} 


\begin{abstract}
	In this work, we ask and answer the question: when is the viscosity of a fluid uniquely determined from spatially sparse measurements of its velocity field? We pose the question mathematically as an optimization problem using the determining map (the mapping of data to an approximation made via a nudging algorithm) to define a loss functional, the minimization of which solves the inverse problem of identifying the true viscosity given the measurement data. We give explicit \emph{a priori} conditions for the well-posedness of this inverse problem. In addition, we show that smallness of the loss functional implies proximity to the true viscosity. We then present an algorithm for solving the inverse problem and provide \emph{a priori} verifiable conditions that ensure its convergence.
\end{abstract}


\maketitle

\section{Introduction}
In many instances, the general form of a particular dynamical system is commonly derived from well-accepted fundamental physical, biological or epidemiological principles. However, frequently these systems have one or multiple parameters which play a crucial role in their dynamical evolution. For instance, in  geophysical models such as the Navier--Stokes or the Boussinesq systems, physical parameters such as the Reynolds, Raleigh or Prandtl numbers determine whether or not the system approaches an equilibrium or transitions to chaos, while in the SEIRD model of transmission of an epidemic, the associated parameters determine whether the disease is eradicated or becomes endemic. Other examples include reservoir modelling \cite{Ig} and determining the transmissivity coefficient of groundwater, which captures its ability to move across an aquifer; it is a central parameter in these models that must be estimated in some way \cite{EL91, Yeh86} in order to make accurate predictions.
 
Due to its ubiquity in applications, there has been a recent surge of interest in parameter determination and estimation problems \emph{from finitely many partial observations} of the system under consideration. For instance, in case of a geophysical model with a state variable $\theta (\mbf x,t)$, depending on a spatial variable $\mbf x$ and time $t$, such observations may take the form of {\em nodal} values $\{\theta(\mbf x_i,t)\}_{i=1}^N$ where $\{\mbf x_i\}_{i=1}^N$ is a finite set of points in the spatial domain. These observations can either be collected continuously in time or at a discrete sequence of time points $t_1< t_2< \cdots$. 

Other examples of commonly used observational data include finitely many Fourier coefficients $\{\widehat{\theta}(n,t):|n| \le N\}$ or {\em volume elements} $\{\bar{\theta}_i\}_{i=1}^N$ where $\bar{\theta}_i=\int_{\Omega_i} \theta(\mbf x, t)\}$ and $\Omega_i$ is a subset of the whole spatial domain $\Omega$. For finite dimensional dynamical systems, such  partial observations constitute a time series of observations on a subset of the variables. In the SEIRD model of epidemic transmission, data may not be available for certain variables such as the exposed population due to asymptomatic transmission, thus leading to partial observations in these cases. 

The above mentioned examples can be mathematically formulated as follows. Let $\{\bu (t)\}$ be a dynamical system parameterized by $\nu$, evolving on a Hilbert space, $\mathcal H$, according to the equation
\be \label{eqn:param}
\dt{\bu}=F_{\mbf \nu}(\bu(t)), \: \bu(\cdot) \in \mathcal H,\: {\mbf \nu} \in \R^d.
\ee
The problem under consideration then is the estimation of the parameter ${\mbf \nu} \in \R^d$ based on {\em observations} $\{\co (\bu(t))\}$, where $\co: \cal H \lra \cal \R^N$ is an adequate linear (or nonlinear) operator and $t \in [t_0,T], T \le \infty$. More generally, one may  have observations on a discrete set of points $\{\co(\bu(t_i)\}_{i=1}^\infty$ and the observations may be contaminated with error.
 When ${\mbf \nu}$ is known, recovering the state variable from the observational data is commonly referred to as data assimilation. When ${\mbf \nu}$ is unknown, we may view this as a parameter estimation problem embedded in data assimilation.
 This has led to various computational strategies using Bayesian estimation methods \cite{dashti2017bayesian,Cotter_2009,xun2013parameter}, machine learning \cite{RK2018, RKP2019,ayed2019learning,baumeister1997line}, the Kalman filter \cite{evensen2009ensemble,van2001square}, or other numerical methods based on nudging \cite{carlson2018,PWM,albanez_parameter_2022,DiLeoni_Clark_Mazzino_Biferale_2018_inferring}, as well as software packages implementing solutions (e.g. \cite{raue2015data2dynamics,ruthotto2017jinv,hippylib}).
In this context, we note first that the parameter identification problem is not in general well-posed even when the state variable is fully observable, i.e. when the solution $\bu$ in \eqref{eqn:param} is known exactly. As described below, this can be seen in the multi-parameter identification problem addressed in \cite{PWM}.

Let $\Omega=[0,2\pi]$ and 
$$\cal H=\{f:\Omega \ra \R,\: f\ \text{is}\ 2\pi-\text{periodic},\: \int_{\Omega} f=0\}$$
and $A=-\partial_{xx}$ on $\cal H \cap \sH^2$. Consider now 
the problem of determining the parameter vector, $(\lambda_1,\lambda_2)$, from the solution, $u$, of the equation
\be  \label{paramillposed}
u_t - \lambda_1 A u + \lambda_2 A^2 u =f,
\ee
which is an illustrative example of the type of parameter identification problems discussed in \cite{PWM}.
If one takes $u=u_\delta$ to be an (time-independent) eigenfunction of $A$ corresponding to the eigenvalue $\delta$, and $f=c\delta u_\delta$, then $u_\delta$ solves the equation \eqref{paramillposed} provided $(\lambda_1-\lambda_2 \delta)=c$. However, \emph{one cannot determine the parameters} $\lambda_1$ and $\lambda_2$, even if one knows $u$ completely on the entire domain, $\Omega$; in a neighborhood of this solution, the inverse problem of determining the parameter $(\lambda_1,\lambda_2)$ is  ill-posed. Thus, the question of whether or not the parameter, ${\mbf \nu}$, can be determined in \eqref{eqn:param} from partial observations (i.e. whether or not the  parameter-to-data map is one-to-one) is even more delicate. 

\subsection{Overview of Results}
Here, for the specific case of determining the viscosity parameter for the two-dimensional Navier--Stokes equations (2D NSE), we consider the fundamental questions inherent in rigorous justification of the parameter estimation  methods mentioned before, namely, when can one recover the parameter from observational data and what is the regularity property of the inverse map. First, we find a lower bound on the resolution of the data (the number of Fourier modes) which guarantees that the viscosity is uniquely determined by the data over an interval of time, and then we show that this inverse mapping satisfies a Lipschitz property.

In addition, we define an algorithm for solving the inverse problem using the determining map. Our algorithm can be seen as an improvement over the algorithm originally presented in \cite{carlson2018} due to the replacement of the \emph{a posteriori} verifiable condition with an \emph{a priori} condition for the update to have a nonzero denominator. Our rigorous convergence criteria is also an improvement over the recent work in \cite{Martinez_2022} (which was developed simultaneously with this work) for the same reason; in addition, our criteria is independent of the magnitude of the observable error, whereas the condition in \cite{Martinez_2022} requires more data as the observable error decreases.

To give more context to our results, consider \cite{CialencoGlattHoltz2011}, where the authors considered the stochastically perturbed Navier--Stokes equations and constructed estimators which converge in probability to the true viscosity in the limit as the number of modes collected from a given sample path tends to infinity. In comparison, we show that, for the deterministically forced Navier--Stokes equations, the viscosity is determined from a {\em finite} number of modes.

In Section~\ref{sec:preliminaries} we will establish some notation and review the basic theory necessary for studying the Navier--Stokes equations. We will also briefly review the nudging data assimilation algorithm originally presented in \cite{AOT}. Then, in Section~\ref{sec:determining-map}, we define the determining map, extending its domain from that of \cite{BFMT} to include viscosity as a variable, and prove some of its regularity properties.

In Section~\ref{sec:parameter-recovery}, we frame the viscosity recovery inverse problem as a PDE constrained optimization problem, and provide rigorous conditions for the inverse problem to have a unique solution. Then, in Section~\ref{sec:nonzero-loss}, we prove that the loss function controls the viscosity error, which we leverage in Section~\ref{sec:algorithm} to define an algorithm that converges to the true viscosity, solving the inverse problem. 

\section{Preliminaries}\label{sec:preliminaries}
\subsection{The Navier--Stokes equations and their functional form} 
	
The Navier--Stokes equations for a homogeneous, incompressible Newtonian fluid in two dimensions are given by
	\be \label{nse}
	\frac{\partial \bu}{\partial t} - \nu \Delta \bu + (\bu \cdot \nabla ) \bu + \nabla p = \g, \quad \nabla \cdot \bu = 0,
	\ee
	where $\bu = (u_1,u_2)$ and $p$ are the unknowns and denote the velocity vector field and the pressure, respectively, while $\nu > 0$ and $\g$ are given and denote the kinematic viscosity parameter and the body forces applied to the fluid per unit mass, respectively (see \cite{Doering_Gibbon_1995_book} for an introduction to the Navier--Stokes equations). In this work, we consider the Navier--Stokes equations defined on the spatial domain
		\[ \Omega = [0,L]\times[0,L], \] 
	with periodic boundary conditions. 
	
	Let $\hat{\bu}(\mbf k), \mbf k=(k_1,k_2) \in \Z^2$ denote the $\mbf k$-th Fourier coefficient of $\bu$.  Given finitely many Fourier coefficients $\{\hat \bu (\mbf k):|\mbf k| \le N\}$, we address the question as to when the viscosity $\nu$ be determined from this finite observational data and whether the data-to-viscosity map is Lipschitz.

	We now proceed to recall the basic functional setting of the NSE, a systematic development of which can be found in \cite{cf, Temambook1997, Temambook2001}.
	Let $\mV$ be the space of test functions, given by
	\begin{multline}
	\mV = \left\{\bs{\varphi}: \R^2 \to \R^2 \, :\, \bs{\varphi}\, \text{ is an $L$-periodic trigonometric polynomial, } \right.\\
	\left. \nabla \cdot \bs{\varphi} = 0, \int_\Omega \bs{\varphi} (\bx) \rd \bx = 0 \right\}.
	\end{multline}
	We denote by $H$ and $V$ the closures of $\mV$ with respect to the norms in $(L^2(\Omega))^2$ and $(H^1(\Omega))^2$, respectively. Moreover, we denote by $H'$ and $V'$ the dual spaces of $H$ and $V$, respectively. As usual, we identify $H$ with $H'$, so that $V \subseteq H \subseteq V'$ with the injections being continuous and compact, with each space being densely embedded in the following one. The duality action between $V'$ and $V'$ is denoted by $\langle \cdot, \cdot \rangle_{V',V}$.

	The inner product in $H$ is given by
	\[
	(\bu_1,\bu_2) = \int_\Omega \bu_1 \cdot \bu_2 \, \rd \bx \quad \forall \bu_1, \bu_2 \in H,
	\]
	with the corresponding norm denoted by $|\bu| := \|\bu\|_{L^2} = (\bu,\bu)^{1/2}$. In $V$, we consider the following inner product:
	\[
	(\!( \bu_1,\bu_2 )\!) = \int_\Omega \nabla \bu_1 : \nabla \bu_2 \, \rd \bx \quad \forall \bu_1, \bu_2 \in V,
	\]
	where it is understood that $\nabla \bu_1 : \nabla \bu_2$ denotes the component-wise product between the tensors $\nabla \bu_1$ and $\nabla \bu_2$. The corresponding norm in $V$ is given by $\| \bu \| := \|\nabla \bu\|_{L^2} = (\!( \bu, \bu )\!)^{1/2}$. The fact that $\| \cdot \|$ defines a norm on $V$ follows from the Poincar\'e inequality, given in \eqref{eqn:Poincare}, below.

	For every subspace $\Lambda \subset (L^1(\Omega))^2$, we denote
	\[
	\dot{\Lambda}_\per = \left\{ \bs{\varphi} \in \Lambda \,:\, \bs{\varphi} \mbox{ is $L$-periodic and } \int_{\Omega} \bs{\varphi}(\bx) \rd \bx = 0 \right\}.
	\]

	Observe that $H$ is a closed subspace of $(\dot{L}^2(\Omega))^2$.
	Let $P_\sigma$ denote the Helmholtz-Leray projector, which is defined as the orthogonal projection from $(\dot{L}^2_\per(\Omega))^2$ onto $H$. Applying $P_\sigma$ to \eqref{nse}, we obtain the following equivalent functional formulation of the Navier--Stokes equations.
	\begin{system}[Navier--Stokes in functional form]
		\be\label{sys:Navier--Stokes}
		\frac{\rd \bu}{\rd t} + \nu A \bu + B(\bu,\bu) = \f \mbox{ in } V',
		\ee
	where $\f = P_\sigma \g$.
	\end{system}
	The bilinear operator $B: V \times V \to V'$ is defined as the continuous extension of
	\[
	B(\bu,\bv) = P_\sigma [(\bu \cdot \nabla) \bv] \quad \forall \bu, \bv \in \mV,
	\]
	and $A: D(A) \subset V \to V'$, the Stokes operator, is the continuous extension of
	\[
	A \bu = - P_\sigma \Delta \bu \quad \forall \bu \in \mV.
	\]
	In fact, in the case of periodic boundary conditions, we have $A = - \Delta$.

	We recall that $D(A) = V \cap (\dot{H}^2_\per(\Omega))^2$ and that $A$ is a positive and self-adjoint operator with compact inverse. Therefore, the space $H$ admits an orthonormal basis $\{\bs \bphi_j\}_{j=1}^\infty$ of eigenfunctions of $A$ corresponding to a non-decreasing sequence of eigenvalues $\{\lambda_j\}_{j=1}^\infty$,
	where $\lambda_j \in \{\kappa_0^2|\bk|^2, \bk \in \Z^2 \setminus \{0\}\}$ and $\lambda_1:=\kappa_0^2=(2\pi/L)^2$. 
	For a periodic function $\bphi$ on $\Omega$, we recall the Parseval's identity:
	\[
		 |\bphi|^2 = L^2 \sum_\bk |\hat{\bphi}_\bk|^2,
	\]
	where $\hat{\bphi}_\bk$ denotes the Fourier coefficient corresponding to $\bk=(k_1,k_2) \in \Z^2$.
	Furthermore, $\bphi \in L^2(\Omega)$ satisfies the mean-free condition $\int_\Omega \bphi =0$ if and only if $\hat{\bphi}_0=0$.
	  
	Given a normed space $\mathcal{X}$ with norm $\|\cdot\|_\mathcal{X}$, let \( C_b(\mathcal{X}) \) denote the continuous bounded mappings from $\mathbb{R}$ to $X$, and define the norm $\|\bphi\|_{C_b(X)} = \sup_{t\in\mathbb{R}} \|\bphi(t)\|_\mathcal{X}.$ In particular, we have the norms $\|\cdot\|_{C_b(H)}$ and $\|\cdot\|_{C_b(V)}.$

\subsection{Global attractor for the Navier--Stokes equations} 
	It is well-known that, given $\bu_0 \in H$, there exists a unique solution $\bu$ of \eqref{sys:Navier--Stokes} on $[0,\infty)$ such that $\bu(0) = \bu_0$ and
	\be\label{propssolNSE}
	\bu \in C([0,\infty);H) \cap \Lloc^2([0,\infty);V)\ \mbox{and}\ \dt \bu \in L^2_{loc}([0,\infty); V').
	\ee
	Moreover, we also have $\bu \in C((0,\infty);D(A))$ (see, e.g., \cite[Theorem 12.1]{cf}). Therefore, equation \eqref{sys:Navier--Stokes} has an associated semigroup $\{S_\nu(t)\}_{t\geq 0}$, where, for each $t \geq 0$, $S_\nu(t): H \to H$ is the mapping given by
	\be\label{defSt}
	S_\nu(t)\bu_0 = \bu_\nu(t),
	\ee
	with $\bu_\nu$ being the unique solution of \eqref{sys:Navier--Stokes} on $[0,\infty)$ satisfying $\bu_\nu(0) = \bu_0$ and \eqref{propssolNSE}. For simplicity, we will drop the subscript $\nu$ as it will be understood from context and simply write $S(t)$ and $\bu(t)$ instead.

	Recall that a bounded set $\mB \subset H$ is called \emph{absorbing} with respect to $\{S(t)\}_{t \geq 0}$ if, for any bounded subset $B \subset H$, there exists a time $T = T(B)$ such that $S(t)B \subset \mB$ for all $t \geq T$. The existence of a bounded absorbing set for \eqref{sys:Navier--Stokes} is a well-known result; therefore, a global attractor $\mA_\nu$ of \eqref{sys:Navier--Stokes} exists, and is uniquely defined by any of the equivalent conditions given below.

	\begin{definition}[Global Attractor]
	Let $\mB \subset H$ be a bounded absorbing set with respect to $\{S(t)\}_{t \geq 0}$.  
	Then the global attractor, $\mA_\nu$, exists as given by any of the following equivalent definitions (see \cite{RobinsonBook}):
	\begin{enumerate}
		\item \[ \mA_\nu = \bigcap_{t\geq0} S(t) \mB. \]
		\item $\mA_\nu$ is the largest compact subset of $H$ which is invariant under the action of the semigroup $\{S(t)\}_{t\geq 0}$, i.e., $S(t) \mA_\nu = \mA_\nu$ for all $t \geq 0$.
		\item $\mA_\nu$ is the minimal set that attracts all bounded sets.
		\item $\mA_\nu$ is the set of all points in $H$ through which there exists a globally bounded trajectory $\bu(t)$, $t \in \R$, with $\sup_{t \in \R} \|\bu(t)\|_{L^2} < \infty $.
	\end{enumerate}
	\end{definition}

	Also, recall the definition of the (dimensionless) Grashof number, given by
	\be\label{defG}
	G_\nu = \frac{\|\f\|_{L^2}}{(\nu \kappa_0)^2}.
	\ee
	In the periodic case, the following bounds hold on the global attractor, $\mA_\nu$:
	\be\label{boundmAH1}
	\|\bu\|_{L^2} \le \nu G_\nu, \quad \|\nabla \bu\|_{L^2} \leq \nu \kappa_0 G_\nu \quad \forall \bu \in \mA_\nu,
	\ee
	and
	\be\label{boundmADA}
uu	\|A\bu\|_{L^2} \leq c_2  \nu \kappa_0^2 (G_\nu + c_0^{-2})^3 \quad \forall \bu \in \mA_\nu,
	\ee
	where $c_0$ is the constant given below in \eqref{ineqLadyzhenskaya}, and $c_2 = 2137\,c_0^4$. The proof of \eqref{boundmAH1} can be found in any of the references listed above (\cite{cf, Temambook1997, Temambook2001}), and the proof of \eqref{boundmADA} is given in \cite[Lemma 4.4]{FoiasJollyLanRupamYangZhang2015}.

	In particular, we will make use of the following fact wich follows directly from \eqref{boundmAH1} and \eqref{boundmADA}: if $\bu\in\mA_\nu$, then $\bu \in C_b(H)$ and $\bu \in C_b(V)$. 

\subsection{Data Assimilation via Nudging} 
	The data assimilation problem we consider is defined as follows: given measurements of a reference solution of \eqref{sys:Navier--Stokes} on the attractor, and a guess for the fluid viscosity, contruct an approximation of the solution. This kind of problem is typically known as data assimilation, but can also be thought of as an inverse problem (mapping measurements of the velocity field back to the full velocity field) or a regression problem (interpolating the measurements in a way that generalizes to the entire space and time domain). 

	There are many approaches to solving data assimilation problems. Extensions of the Kalman filter for nonlinear problems, like the extended Kalman filter, or ensemble Kalman filter are common \cite{evensen2009ensemble,van2001square}, as well as 3DVAR and 4DVAR \cite{BlomkerLawStuartZygalakis2013}, or more recently, machine learning approaches such as Physics Informed Neural Networks \cite{RK2018,RKP2019}. The approach we consider, often called nudging, benefits from a rigorous mathematical framework for establishing convergence developed by Azouani, Olsen and Titi in 2014 \cite{AOT}. Since then, many authors have developed extensions of nudging and applied nudging to several equations (e.g. \cite{albanez2016continuous,Altaf_Titi_Knio_Zhao_Mc_Cabe_Hoteit_2015,bessaih2015continuous,Biswas_Bradshaw_Jolly_2020,biswas2018continuous,Biswas_Martinez_2017,Biswas_Price_2020_AOT3D,blocher2018data,Desamsetti_Dasari_Langodan_Knio_Hoteit_Titi_2019_WRF,Celik_Olson_Titi_2019,Chen_Li_Lunasin_2021,farhat2020data,Farhat_Jolly_Titi_2015,farhat2016charney,Farhat_Lunasin_Titi_2016abridged,Farhat_Lunasin_Titi_2016benard,Farhat_Lunasin_Titi_2017_Horizontal,Foias_Mondaini_Titi_2016,Gardner_Larios_Rebholz_Vargun_Zerfas_2020_VVDA,ibdah2020fully,Jolly_Martinez_Olson_Titi_2018_blurred_SQG,Jolly_Martinez_Titi_2017,Larios_Pei_2018_NSV_DA,Larios_Victor_2019,Larios_Victor_2021_chiVsdelta2D,Markowich_Titi_Trabelsi_2016_Darcy,Pei_2019,Larios_Pei_2017_KSE_DA_NL}), and studied the efficacy of nudging numerically \cite{Gesho_Olson_Titi_2015,Lunasin_Titi_2015,di2020synchronization,Hudson_Jolly}.

	The idea can be simply stated as follows: given data collected on a variable, $y$, and a method of interpolating the data, putting them together, we have an interpolant operator $I_h$, where $h$ is a measure of the resolution of the data, and $I_h(y)$ is the interpolation of the data. If $y$ satisfies a differential equation $\dot{y} = F(y)$, then the nudging algorithm is to solve $$ \dot{\tilde{y}} = F(y) + \mu \left(I_h(y) - I_h(\tilde{y})\right) $$
	for an approximation, $\tilde{y}$. The algorithm is succesful when $\tilde{y}$ converges to $y$.
	
	In \cite{AOT}, the authors prove results for interpolant operators satisfying one of two approximation criteria. The first kind of interpolant operator (and most strict) has the following property:
	\begin{definition}[Type~1 Interpolant Operator]
	\[ |\bphi - I_h(\bphi)| \leq c_1 h |\nabla \bphi| \quad \forall \bphi \in (\dot{H}^1(\Omega))^2 \]
	\end{definition}
	In this work, we focus on the case where the measurements are exact values for finitely many spectral coefficients, which are then interpolated to give approximations in the range of the modal projection, \(\Proj.\) Specifically, for any $N \geq 1$, \(\Proj\) is defined by:
	\[ \widehat{\Proj(\bphi)}_\bk = \begin{cases} \widehat{\bphi}_\bk, \quad |\bk| < N \\ 0, \quad |\bk| \geq N \end{cases} \quad \forall \bk \in \mathbb{Z}^2. \]
	
	The modal projection is an example of a Type~1 interpolant operator, as can easily be verified.
	\begin{equation}\label{eqn:Type-1}
		|\bphi - \Proj{\bphi}|^2 \leq \frac{1}{\kappa_0^2}\frac{1}{N^2} \|\bphi\|^2
	\end{equation}
	\optionalproof{ 
	\begin{multline*}	
		|\bphi|^2 = \int_{\Omega} |\bphi|^2 d\bx 
		= \int_{\Omega} \sum_\bk\sum_\bl \bphi_\bk \cdot \bar{\bphi_\bl} e^{i \kappa_0 \bk\cdot \bx - i \kappa_0 \bl\cdot \bx} d\bx \\
		= \sum_\bk\sum_\bl (\bphi_\bk \cdot \bar{\bphi_\bl}) \int_{\Omega} e^{i \kappa_0 (\bk-\bl)\cdot \bx} d\bx
		= \sum_\bk\sum_\bl (\bphi_\bk \cdot \bar{\bphi_\bl}) |\Omega| \delta_{\bk \bl} \\
		= |\Omega| \sum_\bk |\bphi_\bk|^2 = |\Omega| |\hat{\bphi}|^2 
	\end{multline*}
	and
	\begin{multline*}
		|\bphi - \Proj{\bphi}|^2 = |\Omega| |\hat{\bphi} - \widehat{\Proj{\bphi}}|^2 
			\leq |\Omega| \frac{1}{N^2} \sum_{\bk\in\mathbb{Z}^2} |\bk|^2|\hat{\bphi}_\bk|^2 \\
			= |\Omega| \frac{1}{\kappa_0^2 N^2} \sum_{\bk\in\mathbb{Z}^2} |i \kappa_0 \bk \hat{\bphi}_\bk^T|^2 
			= |\Omega| \frac{1}{\kappa_0^2 N^2} |\widehat{\nabla \bphi}|^2 
			= \frac{1}{\kappa_0^2}\frac{1}{N^2} |\nabla \bphi|^2
	\end{multline*}
	}
	In particular, for any \(\bphi \in H\) (which by the mean-free condition implies \(P_{1}{\bphi} = 0\)) we have the Poincar\'e inequality
	\begin{equation}\label{eqn:Poincare}
		|\bphi| = |\bphi - P_{1}{\bphi}| \leq \frac{1}{\kappa_0} \| \bphi \|.
	\end{equation}

	We explicitly state the nudging data assimilation algorithm for our current setting in System~\ref{sys:data-assimilation}.
	\begin{system}[Navier--Stokes with data feedback]
		Given $\bphi \in C_b(V)$ (the data) and an approximate viscosity, $\gamma$, find $\bv \in C_b(V)$ satisfying
		\begin{equation}\label{sys:data-assimilation}
			\NSE{\bv}{\gamma} = \f + \mu (\bphi - \Proj{\bv}).
		\end{equation}
	\end{system}
	When solving the data assimilation problem, we typically have $\bphi = \Proj{\bu}$, where $\bu\in\mA_\nu$. However, \eqref{sys:data-assimilation} remains valid in the general setting with $\bphi \in C_b(V)$, and is the basis for the definition of the determining map in Section~\ref{sec:determining-map}.

\subsection{Standard Inequalities}
	We now review several standard inequalities which are used throughout the subsequent sections.
	We often use Young's inequality to establish estimates,
	\begin{equation}\label{eqn:Young-full}
		a^p b^{1-p} \leq p a + (1-p) b,\quad \forall a,b \geq 0, \; p\in[0,1].
	\end{equation}
	In particular, we frequently use Young's inequality in the following form:
	\begin{equation}\label{eqn:Young}
		ab \leq \frac{1}{2\epsilon}a^2 + \frac{\epsilon}{2}b^2,\quad \forall a,b \geq 0, \; \epsilon > 0.
	\end{equation}
	We also make use of Jensen's inequality,
	\begin{equation}\label{eqn:Jensen}
		\left(\frac{1}{L}\int_0^L \bphi(x) dx\right)^2 \leq \frac{1}{L}\int_0^L \bphi^2(x) dx
	\end{equation}
	
	We use Ladyzhenskaya's inequality (for periodic boundary conditions) in the following form to obtain estimates of the nonlinear term in \eqref{sys:data-assimilation} (a proof is provided in the Appendix):
	\begin{equation}\label{eqn:Lady}
		\|\bphi\|_{L^4(\Omega)} \leq |\bphi|^{\frac12} \left(\tfrac{1}{L} |\bphi| + \|\bphi\|\right)^{\frac12}
	\end{equation}
	\begin{remark}
		Note that, using \eqref{eqn:Poincare}, we can rewrite \eqref{eqn:Lady} in the more usual form
		\be  \label{ineqLadyzhenskaya}
		 \|\bphi\|_{L^4(\Omega)} \leq c_0 |\bphi|^{\frac12} \|\bphi\|^{\frac12}
		\ee
		where $c_0 = \sqrt{1 + \tfrac1{2\pi}}$. 
	\end{remark}

	Using \eqref{eqn:Lady}, we have the following estimate:
	\begin{multline}\label{eqn:nonlinear-est-Lady}
		|\b{\bu}{\bv}{\bw}| \leq \|\bu\|_{L^4(\Omega)} |\nabla \bv| \|\bw\|_{L^4(\Omega)} \\
		\leq |\bu|^{\frac12} \left(\tfrac{1}{L} |\bu| + \|\bu\|\right)^{\frac12}
		\|\bv\|
		|\bw|^{\frac12} \left(\tfrac{1}{L} |\bw| + \|\bw\|\right)^{\frac12}
	\end{multline}
	where $\b{\bu}{\bv}{\bw} := \left(\B{\bu}{\bv},{\bw}\right)$.


\section{Determining Map}\label{sec:determining-map}
The determining map was first defined in \cite{FJKT1,FJKT2} as the mapping of data to the corresponding solution of \eqref{sys:data-assimilation}. It was exploited in \cite{BFMT} for statistical data assimilation. We extend the definition of the determining map to include viscosity as an input and establish its Lipschitz property in all its arguments, which plays a pivotal role in establishing the results in Section~\ref{sec:parameter-recovery} and Section~\ref{sec:algorithm}.

	\begin{definition}[Determining Map]\label{def:detmap} 
		Fix an arbitrary lower bound $\nu_0 > 0$ for the viscosity, and fix an arbitrary radius $R > 0$ for a ball $B_R(0) \subset C_b(V)$. The determining map, $\mbb W:(\nu_0,\infty)\times B_R(0) \to C_b(V)$, is the mapping of viscosity and data to the corresponding solution of \eqref{sys:data-assimilation} on the attractor.
	\end{definition} 

	\noindent For example, $\Wmap{\tilde{\nu}}{\Proj{\bu}}$ is the solution of \eqref{sys:data-assimilation} with $\gamma = \tilde{\nu}$ and $\bphi = \Proj{\bu}$. However, we don't in general require that the data $\bphi$ come from a solution of \eqref{sys:Navier--Stokes}.

	We begin by deriving sufficient conditions for the determining map to be a well-defined Lipschitz continuous mapping. These conditions are placed on $\mu$ and $N$. Note that in contrast to the usual analysis taken wherein $N$ depends on $\mu$, we derive our results in such a way that the lower bound for $N$ comes directly from the data, forcing, and $\nu_0$, and $\mu$ is chosen to be larger than a lower bound which grows with $N^2$. Specifically, we parameterize $\mu$ in terms of a (non-dimensional) constant \( \mu_0 > 0 \) as follows:
	\begin{equation}\label{eqn:mu-param}
		\mu = \mu_0 \nu_0 \kappa_0^2 N^2.
	\end{equation}

	\begin{lemma}\label{lem:apriori-bounds} 
		Fix \( \nu_0 \in \mathbb{R}_+ \) and suppose that 
		\begin{equation}\label{cnd:mu-N}
			\mu_0 \geq 1 \quad\text{i.e.}\quad \mu \geq \nu_0 \kappa_0^2 N^2.
		\end{equation}
		For any \(\f,\bphi \in C_b(V), \) and for any \( 1 \leq N \in \mathbb{R}, \)  if 
		\[ \gamma \in [\nu_0, \infty) \]
		and $\bv$ is a corresponding solution of \eqref{sys:data-assimilation}, then the following bounds are satisfied:
		\begin{equation}\label{prp:W-map_bounded-H}
			\|\bv\|_{C_b(H)}^2 \leq \frac{2}{(\nu_0 \kappa_0^2 N^2)^2}\|\f\|_{C_b(H)}^2 
				+ 2\mu_0^2 \|\bphi\|_{C_b(H)}^2 \le (M_H(\bphi))^2,
		\end{equation}
		and
		\begin{equation}\label{prp:W-map_bounded-V}
			\|\bv\|_{C_b(V)}^2 \leq \frac{2}{(\nu_0 \kappa_0^2 N^2)^2}\|\f\|_{C_b(V)}^2 
				+ 2\mu_0^2 \|\bphi\|_{C_b(V)}^2\le (M_V(\bphi))^2.
		\end{equation}
		Here, given fixed \(\f \in C_b(V),\) \(\nu_0, \kappa_0 > 0,\) and $\mu_0 \geq 1$, we define
		\begin{equation}\label{eqn:M_H}
		M_H(\bphi) = \sqrt{2}\sqrt{\frac{1}{(\nu_0 \kappa_0^2 )^2}\|\f\|_{C_b(H)}^2 + \mu_0^2 \|\bphi\|_{C_b(H)}^2}
		\end{equation}
		and 
		\begin{equation}\label{eqn:M_V}
		M_V(\bphi) = \sqrt{2}\sqrt{\frac{1}{(\nu_0 \kappa_0^2 )^2}\|\f\|_{C_b(V)}^2 + \mu_0^2 \|\bphi\|_{C_b(V)}^2}.
		\end{equation}
	\end{lemma}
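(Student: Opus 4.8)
The plan is to derive both estimates by energy methods, testing the data‑feedback equation \eqref{sys:data-assimilation} against $\bv$ for the $H$ bound and against $A\bv$ for the $V$ bound, and then exploiting the fact that $\bv \in C_b(V)$ is a trajectory defined for all $t \in \R$ to convert the resulting differential inequalities into pointwise‑in‑time supremum bounds. Because the right‑hand sides only see $\f$ and $\bphi$ through their $C_b$ norms, this directly yields the stated estimates. The one arithmetic bookkeeping point to keep in mind throughout is the parametrization \eqref{eqn:mu-param}, so that $\mu^2 = \mu_0^2 (\nu_0\kappa_0^2 N^2)^2$ and the condition \eqref{cnd:mu-N} reads $\mu \ge \nu_0\kappa_0^2 N^2$.

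For \eqref{prp:W-map_bounded-H} I would take the $H$ inner product of \eqref{sys:data-assimilation} with $\bv$. The nonlinear term drops by the standard orthogonality $\b{\bv}{\bv}{\bv}=0$, and since $\Proj{\cdot}$ is an orthogonal projection $(\Proj{\bv},\bv)=|\Proj{\bv}|^2$, giving
\[
\tfrac12 \tfrac{d}{dt}|\bv|^2 + \gamma\|\bv\|^2 + \mu|\Proj{\bv}|^2 = (\f,\bv) + \mu(\bphi,\bv).
\]
The crucial coercivity step recombines the dissipation and nudging terms across the low/high mode split: since the high modes obey $\|\bv-\Proj{\bv}\|^2 \ge \kappa_0^2 N^2 |\bv-\Proj{\bv}|^2$, while $\gamma \ge \nu_0$ and $\mu \ge \nu_0\kappa_0^2 N^2$, one gets
\[
\gamma\|\bv\|^2 + \mu|\Proj{\bv}|^2 \;\ge\; \nu_0\kappa_0^2 N^2\bigl(|\bv-\Proj{\bv}|^2 + |\Proj{\bv}|^2\bigr) = \nu_0\kappa_0^2 N^2 |\bv|^2.
\]
Writing $\alpha=\nu_0\kappa_0^2 N^2$ and applying Young's inequality \eqref{eqn:Young} to $(\f,\bv)$ and $\mu(\bphi,\bv)$ so as to leave total coefficient $\alpha$ on $|\bv|^2$, this reduces to $\tfrac{d}{dt}|\bv|^2 + \alpha|\bv|^2 \le \tfrac{2}{\alpha}\bigl(|\f|^2 + \mu^2|\bphi|^2\bigr)$.

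The $V$ bound \eqref{prp:W-map_bounded-V} is obtained by the parallel computation, testing against $A\bv$: here $(\tfrac{d}{dt}\bv,A\bv)=\tfrac12\tfrac{d}{dt}\|\bv\|^2$, $(A\bv,A\bv)=|A\bv|^2$, and $(\Proj{\bv},A\bv)=\|\Proj{\bv}\|^2$ since $\Proj{\cdot}$ commutes with $A$. The step I expect to be the main obstacle is the nonlinear term $\b{\bv}{\bv}{A\bv}$: in two dimensions with periodic boundary conditions it vanishes identically, $\b{\bv}{\bv}{A\bv}=0$, and this is exactly what keeps the $V$ estimate as clean as the $H$ one, with no residual nonlinear contribution surviving in the bound. (A secondary technical point is justifying that $\bv(t)\in D(A)$ so that testing against $A\bv$ is legitimate; this follows from the parabolic smoothing enjoyed by bounded trajectories of the nudged system, and can be made rigorous via Galerkin truncation.) Granting the identity, the high‑mode estimate $|A(\bv-\Proj{\bv})|^2 \ge \kappa_0^2 N^2\|\bv-\Proj{\bv}\|^2$ gives the same recombination $\gamma|A\bv|^2 + \mu\|\Proj{\bv}\|^2 \ge \alpha\|\bv\|^2$, and bounding $(\f,A\bv)\le\|\f\|\,\|\bv\|$ and $\mu(\bphi,A\bv)\le\mu\|\bphi\|\,\|\bv\|$ through \eqref{eqn:Young} yields $\tfrac{d}{dt}\|\bv\|^2 + \alpha\|\bv\|^2 \le \tfrac{2}{\alpha}\bigl(\|\f\|^2 + \mu^2\|\bphi\|^2\bigr)$.

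Finally, for each differential inequality of the form $\tfrac{d}{dt}y + \alpha y \le \beta$ with $y\in\{|\bv|^2,\|\bv\|^2\}$ bounded on $\R$ (which holds precisely because $\bv\in C_b(V)$), I would multiply by $e^{\alpha t}$, integrate from $s$ to $t$, and let $s\to-\infty$; boundedness of $y$ annihilates the term at $s$ and leaves $y(t)\le \beta/\alpha$. Taking $\beta$ to be the $C_b$‑norm version of the right‑hand side and substituting $\alpha=\nu_0\kappa_0^2 N^2$ together with $\mu^2=\mu_0^2\alpha^2$ reproduces exactly \eqref{prp:W-map_bounded-H} and \eqref{prp:W-map_bounded-V}. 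The closing inequalities $\le (M_H(\bphi))^2$ and $\le (M_V(\bphi))^2$ are then immediate from $N\ge 1$, since $(\nu_0\kappa_0^2 N^2)^{-2} \le (\nu_0\kappa_0^2)^{-2}$ leaves the $\mu_0^2\|\bphi\|^2$ term unchanged while only enlarging the forcing coefficient.
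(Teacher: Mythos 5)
Your proof is correct and follows essentially the same route as the paper: energy identities in $H$ and $V$ (using $\b{\bv}{\bv}{\bv}=0$ and the 2D periodic enstrophy cancellation $(B(\bv,\bv),A\bv)=0$), the spectral gap inequality $\|(I-P_N)\bv\|^2 \geq \kappa_0^2 N^2 |(I-P_N)\bv|^2$ together with $\mu \geq \nu_0\kappa_0^2N^2$, absorption via Young's inequality, and Gr\"onwall's inequality over $(-\infty,t]$ with $s\to-\infty$. If anything, your arrangement---establishing the coercivity $\gamma\|\bv\|^2+\mu|P_N\bv|^2\geq\nu_0\kappa_0^2N^2|\bv|^2$ \emph{before} applying Young---is a cleaner piece of bookkeeping than the paper's parametrized Young constants (its $\epsilon=2/3$, $\delta=3/4$ choices), and it reproduces the identical constants in \eqref{prp:W-map_bounded-H} and \eqref{prp:W-map_bounded-V}.
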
 
	\begin{proof}
		Let \(\bv\) be a solution of \eqref{sys:data-assimilation}. Then taking the inner product of \eqref{sys:data-assimilation} with $\bv$, we have
		\[
			\frac12 \frac{d}{dt} |\bv|^2 + \gamma \|\bv\|^2
			= \left<\f, \bv\right> + \mu \left<\bphi, \bv\right> - \mu |\Proj{\bv}|^2.
		\]
		\def\epsorig{c_1}
		\def\deltorig{c_2}
		Therefore, for any choice of \( \epsorig, \deltorig > 0 \),
		\begin{multline*}
			\frac12 \frac{d}{dt} |\bv|^2 + \gamma \|\bv\|^2 + \mu |\Proj{\bv}|^2
			\leq |\f| |\bv| + \mu |\bphi| |\bv|
			\leq \frac{1}{2\epsorig} |\f|^2 + \frac{\mu}{2\deltorig} |\bphi|^2
				+ \frac{\epsorig + \mu\deltorig}{2}  |\bv|^2 \\
			\leq \frac{1}{2\epsorig} |\f|^2 + \frac{\mu}{2\deltorig} |\bphi|^2
				+ \frac{\epsorig + \mu\deltorig}{2}  |\Proj{\bv}|^2 
				+ \frac{1}{\kappa_0^2}\frac{\epsorig + \mu\deltorig}{2N^2}  \|\bv\|^2,
		\end{multline*}
		where to obtain the last inequality, we used $|\bv|^2=|P_N\bv|^2+|(I-P_N)\bv|^2$ and
		\eqref{eqn:Type-1}. 
		Collecting terms, we have
		\[
			\frac12 \frac{d}{dt} |\bv|^2
				+ \left(\gamma - \frac{1}{\kappa_0^2}\frac{\epsorig + \mu\deltorig}{2N^2}\right) \|\bv\|^2
				+ \left(\mu - \frac{\epsorig + \mu\deltorig}{2}\right) |\Proj{\bv}|^2
			\leq \frac{1}{2\epsorig} |\f|^2 + \frac{\mu}{2\deltorig} |\bphi|^2.
		\]

		Let \( \epsilon, \delta \in (0,1), \) 
		and define 
		\[ r = \frac{\nu_0 \kappa_0^2 N^2}{\mu}. \]
		Choosing
		\[
			\epsorig = 2\mu r\delta(1 - \epsilon)  \quad \text{ and } \quad \deltorig = 2r(1-\delta),
		\]
		we have 
		\[ 
			\epsorig + \mu \deltorig  = 2\mu r(1 - \delta\epsilon)
		\]
		and the previous differential inequality becomes
		\be  \label{eq:basicdiffineq}
			\frac12 \frac{d}{dt} |\bv|^2
				+ \left(\gamma - \nu_0(1-\delta\epsilon)\right) \|\bv\|^2
				+ \left(\mu - \nu_0 \kappa_0^2 N^2(1 - \delta\epsilon)\right) |\Proj{\bv}|^2
			\leq \frac{1}{2\epsorig} |\f|^2 + \frac{\mu}{2\deltorig} |\bphi|^2.
		\ee
		Observe that
		\[
		\|\bv\|^2=\|P_N\bv\|^2+\|(I-P_N)\bv\|^2
		\ge \kappa_0^2 N^2 |(I-P_N)\bv|^2,
		\]
		where to obtain the last inequality we used \eqref{eqn:Type-1}.
Consequently, 
		\begin{multline}
			\frac12 \frac{d}{dt} |\bv|^2
				+ \nu_0\delta\epsilon \|\bv\|^2
				+ \left(\mu - \nu_0 \kappa_0^2 N^2(1 - \delta\epsilon)\right) |\Proj{\bv}|^2
			\\ \geq 
			\frac12 \frac{d}{dt} |\bv|^2
				+ \nu_0\kappa_0^2 N^2 \delta\epsilon |\bv|^2
				+ \left(\mu - \nu_0 \kappa_0^2 N^2\right) |\Proj{\bv}|^2.  \label{eq:diffineq2}
		\end{multline}
		Note that by imposing the constraint \(\epsilon, \delta \in (0,1),\) we have ensured that 
		\( \gamma - \nu_0(1 - \delta\epsilon) \geq \nu_0\delta\epsilon > 0.  \)
		Using \eqref{eq:basicdiffineq}, \eqref{eq:diffineq2} and  \eqref{cnd:mu-N}, and by dropping the last term in  \eqref{eq:diffineq2}, we obtain
		\[
			\frac{d}{dt} |\bv|^2 + \beta |\bv|^2
			\leq \frac{1}{\epsorig} |\f|^2 + \frac{\mu}{\deltorig} |\bphi|^2,
		\]
		with \( \beta := 2\nu_0\kappa_0^2 N^2 \delta\epsilon. \)

		So, by Gr\"owall's inequality,
		\begin{multline*}
			|\bv(t)|^2 \leq e^{-\beta (t-s)} |\bv(s)|^2 
			+ \int_s^t e^{-\beta(t-\tau)} \left( 
				\frac{1}{\epsorig} |\f(\tau)|^2 + \frac{\mu}{\deltorig} |\bphi(\tau)|^2
			\right) d\tau \\
			\leq e^{-\beta (t-s)} |\bv(s)|^2 
			+ \left( \frac{1}{\epsorig} \|\f\|_{C_b(H)}^2 + \frac{\mu}{\deltorig} \|\bphi\|_{C_b(H)}^2 \right) 
				\frac{1}{\beta}\left(1 - e^{-\beta(t-s)}\right)
		\end{multline*}
		We get the stated result by choosing 
		\( \epsilon = \frac{2}{3} \) 
		and 
		\( \delta = \frac{3}{4} \) 
		(so \( \epsorig = \frac12 \nu_0 \kappa_0^2 N^2 \) and \( \deltorig = \frac12 \frac{\nu_0 \kappa_0^2 N^2}{\mu} \)),
		and taking the limit as \( s\to -\infty. \)
	
		Taking the inner product of \eqref{sys:data-assimilation} with $\A \bv$ and proceeding similarly, but this time using the enstrophy cancelation property $(B(\bv,\bv),A\bv)=0$, we obtain \eqref{prp:W-map_bounded-V}.
	\end{proof}

	\comments{ 
	For convenience in using the upper bounds given in Lemma~\ref{lem:apriori-bounds}, given fixed \(\f \in C_b(V),\) \(\nu_0, \kappa_0 > 0,\) and $\mu_0 \geq 1$, we define
	\begin{equation}\label{eqn:M_H}
		M_H(N,\bphi) = \sqrt{2}\sqrt{\frac{1}{(\nu_0 \kappa_0^2 N^2)^2}\|\f\|_{C_b(H)}^2 + \mu_0^2 \|\bphi\|_{C_b(H)}^2}
	\end{equation}
	and 
	\begin{equation}\label{eqn:M_V}
		M_V(N,\bphi) = \sqrt{2}\sqrt{\frac{1}{(\nu_0 \kappa_0^2 N^2)^2}\|\f\|_{C_b(V)}^2 + \mu_0^2 \|\bphi\|_{C_b(V)}^2},
	\end{equation}
	for any $N \geq 1$ and $\bphi \in C_b(V)$.
	Note that $M_H$ and $M_V$ are finite and nonincreasing in $N$, 
	so inequalities of the form $N > M_H(N, \bphi)$ and $N > M_V(N, \bphi)$ are always feasible and can be satisfied by increasing $N$.
	}

	We now derive the main Lipschitz continuity property of the determining map which is essential to the proofs of the subsequent results.

	\begin{lemma}\label{lem:Lipschitz} 
		Let \(\bv_1\) and \(\bv_2\) be solutions of \eqref{sys:data-assimilation} with viscosity \( \gamma_1 \geq \nu_0 \) and \( \gamma_2 \geq \nu_0 \) and data \( \bphi_1 \) and  \( \bphi_2 \) respectively.  Let \( p \in [0,1] \) and set
		\[ \alpha = \frac{\gamma_1 + \gamma_2}{2}\left(1 - \frac12 \left(2 \frac{|\gamma_1 - \gamma_2|}{\gamma_1 + \gamma_2}\right)^p \right). \]
		Then, $\alpha \in  \left[\frac12 \nu_0,\bar{\gamma}\right]$ where $\bar{\gamma}:= \frac12(\gamma_1 + \gamma_2)$. Moreover, 
		if
		\begin{equation}\label{cnd:mu-alpha}
			\mu_0 \geq \max\left\{1, \frac{\alpha}{\nu_0}\right\},
		\end{equation}
		and
		\begin{equation}\label{cnd:Lipschitz-N}
		N \geq \frac{4}{\alpha\kappa_0}
			M_V( \bphi_i), i=1,2.
		\end{equation}
		then
		\begin{equation}\label{eqn:Lipschitz}
			\|\bv_1 - \bv_2\|_{C_b(H)}^2 \leq \frac{5}{2} 
				\frac{|\gamma_1 - \gamma_2|^{2-p}}{\left(\frac{\gamma_1 + \gamma_2}{2}\right)^{1-p}}
				\frac{\|\frac{1}{2}(\bv_1 + \bv_2)\|_{C_b(V)}^2}{\alpha\kappa_0^2N^2} 
			+ \frac{5}{2} \mu_0^2 \left(\frac{\nu_0}{\alpha}\right)^2
			\|\bphi_1 - \bphi_2\|_{C_b(H)}^2.
		\end{equation}
	\end{lemma}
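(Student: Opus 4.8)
The plan is to reduce everything to a single $H$-energy estimate for the difference $\bw := \bv_1 - \bv_2$, run exactly as in the proof of Lemma~\ref{lem:apriori-bounds}, but now tracking the viscosity mismatch through the symmetric splitting $\bv_i = \bar{\bv} \pm \tfrac12\bw$, where $\bar{\bv} := \tfrac12(\bv_1+\bv_2)$. Before that, I would dispatch the elementary two-sided bound on $\alpha$. Writing $x := |\gamma_1-\gamma_2|/\bar\gamma \in [0,2)$ (strict since $\gamma_1,\gamma_2>0$), we have $\alpha = \bar\gamma(1-\tfrac12 x^p)$, so $\alpha \le \bar\gamma$ is immediate. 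For the lower bound I would use $x^p \le \max\{1,x\}$, valid for $x\ge0$ and $p\in[0,1]$: if $x\le 1$ then $\alpha \ge \tfrac12\bar\gamma \ge \tfrac12\nu_0$, while if $x>1$ then $\tfrac{\bar\gamma}{2}x^p \le \tfrac12|\gamma_1-\gamma_2|$, so $\alpha \ge \bar\gamma - \tfrac12|\gamma_1-\gamma_2| = \min\{\gamma_1,\gamma_2\} \ge \nu_0$.

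Subtracting the two copies of \eqref{sys:data-assimilation}, I would use the identities $\gamma_1 A\bv_1 - \gamma_2 A\bv_2 = \bar\gamma A\bw + (\gamma_1-\gamma_2)A\bar{\bv}$ and $\B{\bv_1}{\bv_1}-\B{\bv_2}{\bv_2} = \B{\bar{\bv}}{\bw}+\B{\bw}{\bar{\bv}}$, then take the $H$-inner product with $\bw$. Since $\mu$ is common to both solutions, the feedback contributes $\mu(\bphi_1-\bphi_2,\bw) - \mu|\Proj{\bw}|^2$, and the orthogonality $\b{\bar{\bv}}{\bw}{\bw}=0$ leaves only the trilinear term $\b{\bw}{\bar{\bv}}{\bw}$, giving
\begin{equation*}
\tfrac12\tfrac{d}{dt}|\bw|^2 + \bar\gamma\|\bw\|^2 + \mu|\Proj{\bw}|^2 = -(\gamma_1-\gamma_2)(\!( \bar{\bv},\bw )\!) - \b{\bw}{\bar{\bv}}{\bw} + \mu(\bphi_1-\bphi_2,\bw).
\end{equation*}
I would then estimate the three right-hand terms separately. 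For the viscosity term, bound $|(\gamma_1-\gamma_2)(\!( \bar{\bv},\bw )\!)| \le |\gamma_1-\gamma_2|\,\|\bar{\bv}\|\,\|\bw\|$ and apply \eqref{eqn:Young} with $a = |\gamma_1-\gamma_2|\,\|\bar{\bv}\|$, $b=\|\bw\|$, and the \emph{specific} parameter $\epsilon = \bar\gamma\,x^{p}$; this is engineered so that the $\|\bw\|^2$ coefficient is exactly $\bar\gamma-\alpha$ and the remainder is exactly $\tfrac12\,|\gamma_1-\gamma_2|^{2-p}\bar\gamma^{-(1-p)}\|\bar{\bv}\|^2$. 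This single step is what manufactures both the decay coefficient $\alpha$ and the fractional power $2-p$ seen in \eqref{eqn:Lipschitz}. For the trilinear term, \eqref{eqn:nonlinear-est-Lady} together with \eqref{eqn:Poincare} gives $|\b{\bw}{\bar{\bv}}{\bw}| \le c_0^2 |\bw|\,\|\bw\|\,\|\bar{\bv}\|$, which I would Young-split, peeling off $s\alpha\|\bw\|^2$ and leaving a remainder proportional to $\alpha^{-1}|\bw|^2\|\bar{\bv}\|^2$. The data term I would hold as $\mu|\bphi_1-\bphi_2|\,|\bw|$ for the moment.

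After moving $(\bar\gamma-\alpha)\|\bw\|^2$ to the left, the coercive budget is $\alpha\|\bw\|^2 + \mu|\Proj{\bw}|^2$. Splitting $\bw = \Proj{\bw} + (\bw-\Proj{\bw})$, the high modes obey $\|\bw-\Proj{\bw}\|^2 \ge \kappa_0^2N^2|\bw-\Proj{\bw}|^2$ by \eqref{eqn:Type-1}, while \eqref{cnd:mu-alpha} yields $\mu = \mu_0\nu_0\kappa_0^2N^2 \ge \alpha\kappa_0^2N^2$ to dominate the low modes, so $\alpha\|\bw\|^2 + \mu|\Proj{\bw}|^2 \ge \alpha\kappa_0^2N^2|\bw|^2$. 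The role of \eqref{cnd:Lipschitz-N} is precisely to tame the nonlinear remainder: by Lemma~\ref{lem:apriori-bounds}, $\|\bar{\bv}\|_{C_b(V)} \le \tfrac12(M_V(\bphi_1)+M_V(\bphi_2)) \le \tfrac14\alpha\kappa_0 N$, so $\|\bar{\bv}\|^2 \le \tfrac1{16}\alpha^2\kappa_0^2N^2$ and the remainder is bounded by a small multiple of $\alpha\kappa_0^2N^2|\bw|^2$, absorbable into the coercive $|\bw|^2$ term; the data term I would then dispatch by Young against what remains of that coercive term. Collecting terms produces $\tfrac{d}{dt}|\bw|^2 + \beta|\bw|^2 \le |\gamma_1-\gamma_2|^{2-p}\bar\gamma^{-(1-p)}\|\bar{\bv}\|_{C_b(V)}^2 + C\mu^2|\bphi_1-\bphi_2|^2$ with $\beta$ a fixed positive multiple of $\alpha\kappa_0^2N^2$; Gr\"onwall's inequality followed by letting the initial time tend to $-\infty$ and taking the supremum in $t$ (as in Lemma~\ref{lem:apriori-bounds}) delivers the $C_b(H)$ bound. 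The reassuring structural check is that the data forcing is $N$-independent, since $\mu^2/\beta^2 \propto (\mu_0\nu_0\kappa_0^2N^2)^2/(\alpha\kappa_0^2N^2)^2 = \mu_0^2(\nu_0/\alpha)^2$, matching \eqref{eqn:Lipschitz}.

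The hard part is not any individual estimate but the constant bookkeeping that forces both prefactors down to $\tfrac52$. The two free split parameters (the fraction $s$ spent on the nonlinear term and the fraction of the coercive term spent on the data term) must be chosen jointly, and it is only because the factor $4$ in \eqref{cnd:Lipschitz-N} makes $\|\bar{\bv}\|^2$ as small as $\tfrac1{16}\alpha^2\kappa_0^2N^2$ — and because $c_0^2 = 1+\tfrac1{2\pi}$ is close to $1$ — that the net decay rate $\beta$ stays close enough to $\alpha\kappa_0^2N^2$ for the optimization to land at or below $\tfrac52$ in both terms at once. I would expect to close this by fixing explicit rational split values, in the spirit of the $\epsilon=\tfrac23,\delta=\tfrac34$ choices in Lemma~\ref{lem:apriori-bounds}, and checking the two resulting constants directly.
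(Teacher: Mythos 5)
Your proposal is correct and follows essentially the same route as the paper's proof: the same symmetric decomposition $\bar{\bv},\bw$, the same Young split with parameter $\bar\gamma x^p$ that simultaneously manufactures $\alpha$ and the exponent $2-p$, the same use of Lemma~\ref{lem:apriori-bounds} together with \eqref{cnd:Lipschitz-N} to get $\|\bar{\bv}\|\le\tfrac14\alpha\kappa_0 N$, and the same Gr\"onwall-plus-$s\to-\infty$ conclusion, the only deviation being bookkeeping (you collapse Ladyzhenskaya via Poincar\'e to $c_0^2|\bw|\,\|\bw\|\,\|\bar{\bv}\|$ and invoke the clean coercivity $\alpha\|\bw\|^2+\mu|\Proj{\bw}|^2\ge\alpha\kappa_0^2N^2|\bw|^2$, whereas the paper mode-splits the nonlinear term and tracks the $|\Proj{\bw}|^2$ coefficient explicitly before dropping it). The constant check you deferred does close: with nonlinear split fraction $s=c_0^2/8$ and data-Young parameter $\alpha\kappa_0^2N^2$, the net decay rate is $\left(1-\tfrac{c_0^2}{2}\right)\alpha\kappa_0^2N^2=\left(\tfrac12-\tfrac{1}{4\pi}\right)\alpha\kappa_0^2N^2\ge\tfrac25\,\alpha\kappa_0^2N^2$ (since $\pi\ge\tfrac52$), which yields exactly the two $\tfrac52$ prefactors in \eqref{eqn:Lipschitz}.
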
 
	\begin{proof} 
		Observe that
		\[ \alpha = \bar{\gamma} - \frac12 \bar{\gamma}^{1-p} |\gamma_1 - \gamma_2|^{p}
		= \bar{\gamma}\left(1 - \frac12 \left(2 \frac{|\gamma_1 - \gamma_2|}{\gamma_1 + \gamma_2}\right)^p \right), \] 
		and note that for all \( p\in[0,1], \)
		\begin{equation}\label{eqn:alpha-bounds}
		\bar{\gamma} \geq \alpha \geq \bar{\gamma}\left(
		1 - \frac12 \max\left\{1, 2 \frac{|\gamma_1 - \gamma_2|}{\gamma_1 + \gamma_2}\right\}
		\right) 
		= \min\left\{\frac12\bar{\gamma}, \min\left\{\gamma_1, \gamma_2\right\}\right\}
		\geq \frac12 \nu_0.
		\end{equation}
		Thus,  $\alpha \in  \left[\frac12 \nu_0,\bar{\gamma}\right]$

		Now let \(\bv_1\) and \(\bv_2\) be solutions of \eqref{sys:data-assimilation} with viscosity \( \gamma_1 \) and \( \gamma_2 \) and data \( \bphi_1 \) and  \( \bphi_2 \) respectively. We now analyze the difference \( \bw:= \bv_1 - \bv_2, \) and for convenience of presentation, we define the average quantities \( \bar{\bv} = \frac12(\bv_1 + \bv_2) \) and \( \bar{\gamma} = \frac12(\gamma_1 + \gamma_2). \) 

		Writing the evolution equation for \( \bw \) using \eqref{sys:data-assimilation},
		\[ 
			\partial_t \bw + \bar{\gamma} \A \bw + (\gamma_1 - \gamma_2) \A \bar{\bv} 
			+ \B{\bar{\bv}}{\bw} + \B{\bw}{\bar{\bv}} = \mu (\bphi_1 - \bphi_2) - \mu\Proj{\bw}
		\]
		and taking the inner product with \( \bw \) and using \eqref{eqn:nonlinear-est-Lady}, we have
		\begin{multline*}
			\frac12 \frac{d}{dt}|\bw|^2 + \bar{\gamma} \|\bw\|^2 + \mu|\Proj{\bw}|^2 
			= - (\gamma_1 - \gamma_2) \left<\A \bar{\bv},\bw\right> + \mu \left<(\bphi_1 - \bphi_2), \bw\right> - \b{\bw}{\bar{\bv}}{\bw} \\
			\leq |\gamma_1 - \gamma_2| \|\bar{\bv}\| \|\bw\| + \mu |\bphi_1 - \bphi_2| |\bw| + \|\bar{\bv}\||\bw|(\tfrac1L|\bw| + \|\bw\|)
		\end{multline*}

		Using \eqref{eqn:Young}, 
		for any $p\in[0,1]$, we can write
		\[
			|\gamma_1 - \gamma_2| \|\bar{\bv}\| \|\bw\| 
			\leq \frac12 \bar{\gamma}^{p-1} |\gamma_1 - \gamma_2|^{2-p} \|\bar{\bv}\|^2 
				+ \frac12 \bar{\gamma}^{1-p} |\gamma_1 - \gamma_2|^{p} \|\bw\|^2,
		\]
		therefore,
		\begin{multline*}
			\frac12 \frac{d}{dt}|\bw|^2 
			+ \left(
				\bar{\gamma} - \frac12 \bar{\gamma}^{1-p} |\gamma_1 - \gamma_2|^{p}
			\right) \|\bw\|^2 
			+ \mu |\Proj{\bw}|^2 \\
			\leq \frac12 \bar{\gamma}^{p-1} |\gamma_1 - \gamma_2|^{2-p} \|\bar{\bv}\|^2 
				+ \mu |\bphi_1 - \bphi_2| |\bw| + \|\bar{\bv}\||\bw|(\tfrac1L|\bw| + \|\bw\|).
		\end{multline*}
	

		For the remaining terms, using \eqref{eqn:Young} and \eqref{eqn:Type-1}, for any \( c_1 > 0 \) we have
		\[ 
			\mu |\bphi_1 - \bphi_2| |\bw| 
			\leq \frac{\mu}{2 c_1} |\bphi_1 - \bphi_2|^2 
				+ \frac{\mu c_1}{2 \kappa_0^2 N^2} \|\bw\|^2 + \frac{\mu c_1}{2} |\Proj{\bw}|^2, 
		\]
		and 
		\verbose{ 
		\begin{multline*}
			\|\bar{\bv}\||\bw|(\tfrac1L|\bw| + \|\bw\|) 
			= \tfrac1L\|\bar{\bv}\||\bw|^2 + \|\bar{\bv}\||\bw|\|\bw\| \\
			\leq \frac{1}{\kappa_0^2 N^2 L}\|\bar{\bv}\|\|\bw\|^2 + \frac1L\|\bar{\bv}\||\Proj{\bw}|^2 
				+ \frac{c_2}{2}\|\bar{\bv}\||\bw|^2 + \frac{1}{2c_2}\|\bar{\bv}\|\|\bw\|^2 \\
			\leq \frac{1}{\kappa_0^2 N^2 L}\|\bar{\bv}\|\|\bw\|^2 + \frac1L\|\bar{\bv}\||\Proj{\bw}|^2 
				\\ + \frac{c_2}{2}\|\bar{\bv}\||\Proj{\bw}|^2 + \frac{c_2}{2\kappa_0^2 N^2}\|\bar{\bv}\|\|\bw\|^2 + \frac{1}{2c_2}\|\bar{\bv}\|\|\bw\|^2 \\
			\leq \frac{1}{\kappa_0^2 N^2 L}\|\bar{\bv}\|\|\bw\|^2 + \frac1L\|\bar{\bv}\||\Proj{\bw}|^2 
				\\ + \frac{\kappa_0 N}{2}\|\bar{\bv}\||\Proj{\bw}|^2 + \frac{1}{2\kappa_0 N}\|\bar{\bv}\|\|\bw\|^2 + \frac{1}{2\kappa_0 N}\|\bar{\bv}\|\|\bw\|^2 \\
			\leq \left(\frac{1}{\kappa_0^2 N^2 L} + \frac{1}{\kappa_0 N}\right)\|\bar{\bv}\|\|\bw\|^2 
				+ \left(\frac1L + \frac{\kappa_0 N}{2}\right) \|\bar{\bv}\||\Proj{\bw}|^2.
		\end{multline*}
		} 
		\begin{multline*}
			\|\bar{\bv}\||\bw|(\tfrac1L|\bw| + \|\bw\|) \\
				\leq \left(\frac{1}{\kappa_0^2 N^2 L} + \frac{1}{\kappa_0 N}\right)\|\bar{\bv}\|\|\bw\|^2 
				+ \left(\frac1L + \frac{\kappa_0 N}{2}\right) \|\bar{\bv}\||\Proj{\bw}|^2.
		\end{multline*}
		Therefore,
		\verbose{ 
		\begin{multline*}
			\frac12 \frac{d}{dt}|\bw|^2 + \alpha \|\bw\|^2 + \mu|\Proj{\bw}|^2
			\leq \frac12 \bar{\gamma}^{p-1} |\gamma_1 - \gamma_2|^{2-p} \|\bar{\bv}\|^2
				+ \frac{\mu}{2 c_1} |\bphi_1 - \bphi_2|^2  \\
			+ \frac{\mu c_1}{2 \kappa_0^2 N^2} \|\bw\|^2
			+ \left(\frac{1}{\kappa_0^2 N^2 L} + \frac{1}{\kappa_0 N}\right)\|\bar{\bv}\|\|\bw\|^2 \\
			+ \frac{\mu c_1}{2} |\Proj{\bw}|^2
			+ \left(\frac1L + \frac{\kappa_0 N}{2}\right) \|\bar{\bv}\||\Proj{\bw}|^2
		\end{multline*}
		} 
		\begin{multline*}
			\frac12 \frac{d}{dt}|\bw|^2
			+ \left(
				\alpha
				- \frac{\mu c_1}{2 \kappa_0^2 N^2}
				- \left(\frac{1}{\kappa_0^2 N^2 L} + \frac{1}{\kappa_0 N}\right)\|\bar{\bv}\|
			\right) \|\bw\|^2 \\
			+ \left(
				\mu
				- \frac{\mu c_1}{2}
				- \left(\frac1L + \frac{\kappa_0 N}{2}\right) \|\bar{\bv}\|
			\right) |\Proj{\bw}|^2 \\
			\leq \frac12 \bar{\gamma}^{p-1} |\gamma_1 - \gamma_2|^{2-p} \|\bar{\bv}\|^2
				+ \frac{\mu}{2 c_1} |\bphi_1 - \bphi_2|^2
		\end{multline*}
		and after choosing \( c_1 = \alpha \frac{\kappa_0^2 N^2}{\mu} \) and simplifying,
		\begin{multline*}
			\frac12 \frac{d}{dt}|\bw|^2
			+ \left( \frac12 \alpha 
				- \left(1 + \frac{1}{2\pi N}\right)\frac{\|\bar{\bv}\|}{\kappa_0 N}
			\right) \|\bw\|^2 \\
			+ \left( \mu
				- \frac{1}{2}\alpha \kappa_0^2 N^2
				- \frac1L\left(1 + \pi N\right) \|\bar{\bv}\|
			\right) |\Proj{\bw}|^2 \\
			\leq \frac12 \bar{\gamma}^{p-1} |\gamma_1 - \gamma_2|^{2-p} \|\bar{\bv}\|^2
				+ \frac{\mu^2}{2 \alpha \kappa_0^2 N^2} |\bphi_1 - \bphi_2|^2.
		\end{multline*}

		Now, by \eqref{cnd:Lipschitz-N} and \eqref{prp:W-map_bounded-V}, we have
		\begin{equation}\label{eqn:N-controls-v}
			N \geq \frac{4}{\alpha\kappa_0} \|\bar{\bv}\|,
		\end{equation}
		so, 
		\begin{equation}\label{eqn:Lipschitz-rem-diss} 
		 \frac12 \alpha - \left(1 + \frac{1}{2\pi N}\right)\frac{\|\bar{\bv}\|}{\kappa_0 N} 
			\geq \frac12 \alpha - \left(1 + \frac{1}{2\pi N}\right)\frac{\alpha}{4}
			= \frac14 \alpha \left(1 - \frac1{2\pi N}\right)
			> \frac15 \alpha 
			> 0. 
		\end{equation}
		Therefore, we can apply \eqref{eqn:Type-1}, obtaining
		\begin{multline*}
			\frac12 \frac{d}{dt}|\bw|^2
			+ \left( \frac12 \alpha 
				- \left(1 + \frac{1}{2\pi N}\right)\frac{\|\bar{\bv}\|}{\kappa_0 N}
			\right) \kappa_0^2N^2 |\bw|^2 \\
			+ \left( \mu
				- \alpha \kappa_0^2N^2 
				+ \frac12 \kappa_0 N \|\bar{\bv}\|
			\right) |\Proj{\bw}|^2 \\
			\leq \frac12 \bar{\gamma}^{p-1} |\gamma_1 - \gamma_2|^{2-p} \|\bar{\bv}\|^2
				+ \frac{\mu^2}{2 \alpha \kappa_0^2 N^2} |\bphi_1 - \bphi_2|^2.
		\end{multline*}
		By assumption \eqref{cnd:mu-alpha} 
		we can drop the third term. Then, after simplifying and applying the estimate \eqref{eqn:Lipschitz-rem-diss} to the second term on the left hand side, we have
		\[
			\frac{d}{dt}|\bw|^2
				+ \frac{2}{5} \alpha \kappa_0^2 N^2 |\bw|^2 \\
			\leq \bar{\gamma}^{p-1} |\gamma_1 - \gamma_2|^{2-p} \|\bar{\bv}\|^2 
				+ \frac{\mu^2}{\alpha \kappa_0^2 N^2} |\bphi_1 - \bphi_2|^2.
		\]
		From here, for any \( -\infty < s < t \) we can apply Gr\"onwall's inequality, 
		\begin{multline*}
			|\bw(t)|^2 \leq e^{-\frac{2}{5} \alpha \kappa_0^2 N^2 (t-s)} |\bw(s)|^2  \\
			+ \bar{\gamma}^{p-1} |\gamma_1 - \gamma_2|^{2-p} \|\bar{\bv}\|_{C_b(V)}^2
				\frac{5}{2}\frac{1}{\alpha\kappa_0^2N^2} \left(1 - e^{-\frac{2}{5} \alpha \kappa_0^2 N^2(t-s)}\right)\\
			+ \frac{\mu^2}{\alpha \kappa_0^2 N^2} \|\bphi_1 - \bphi_2\|_{C_b(H)}^2 
				\frac{5}{2}\frac{1}{\alpha\kappa_0^2N^2} \left(1 - e^{-\frac{2}{5} \alpha \kappa_0^2 N^2(t-s)}\right)
		\end{multline*}
		and taking the limit as \( s\to-\infty \) we get the stated result.
	\end{proof} 

	In the previous lemma, the lower bound for $N$ depends on the value of $\alpha$, which depends on the viscosities. If we impose an upper bound, $\nu_1$, on the viscosities in addition to the lower bound, $\nu_0$, we can easily replace this dependence to be on $\nu_1$ and $\nu_0$.

	\begin{corollary}\label{cor:Lipschitz_wUpperBound} 
		Let \(\bv_1\) and \(\bv_2\) be solutions of \eqref{sys:data-assimilation} with viscosity \( \gamma_1 \) and \( \gamma_2 \) and data \( \bphi_1 \) and  \( \bphi_2 \) respectively, and suppose that \( \gamma_1, \gamma_2 \in [\nu_0, \nu_1] \), where \( \nu_1 > \nu_0 \).
		If
		\begin{equation}\label{cnd:mu-cor0}
			\mu_0 \geq \frac{\nu_1}{\nu_0},
		\end{equation}
		and
		\begin{equation}\label{cnd:N-cor0}
		N \geq \frac{8}{\nu_0\kappa_0}
			M_V( \bphi_i)
		\end{equation}
		for \(i = 1,2,\)
		then
		\begin{equation*}
			\|\bv_1 - \bv_2\|_{C_b(H)}^2 \leq 5
				\frac{|\gamma_1 - \gamma_2|^{2-p}}{\left(\frac{\gamma_1 + \gamma_2}{2}\right)^{1-p}}
				\frac{\|\frac{1}{2}(\bv_1 + \bv_2)\|_{C_b(V)}^2}{\nu_0\kappa_0^2N^2} 
			+ 10 \mu_0^2 \|\bphi_1 - \bphi_2\|_{C_b(H)}^2.
		\end{equation*}
	\end{corollary}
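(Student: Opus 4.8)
The plan is to deduce this corollary directly from Lemma~\ref{lem:Lipschitz} by using the two-sided bound on $\alpha$ established there, namely $\frac12\nu_0 \le \alpha \le \bar\gamma$ (see \eqref{eqn:alpha-bounds}), to trade every occurrence of $\alpha$ for the fixed bounds $\nu_0$ and $\nu_1$. No new analysis of \eqref{sys:data-assimilation} is needed; the work is entirely in verifying that the corollary's hypotheses \eqref{cnd:mu-cor0} and \eqref{cnd:N-cor0} imply the lemma's hypotheses \eqref{cnd:mu-alpha} and \eqref{cnd:Lipschitz-N}, and then coarsening the lemma's conclusion.

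First I would check the hypotheses. Since $\gamma_1,\gamma_2\in[\nu_0,\nu_1]$, we have $\alpha\le\bar\gamma=\frac12(\gamma_1+\gamma_2)\le\nu_1$, so $\alpha/\nu_0\le\nu_1/\nu_0$; because $\nu_1>\nu_0$ forces $\nu_1/\nu_0>1$, the assumption \eqref{cnd:mu-cor0} that $\mu_0\ge\nu_1/\nu_0$ yields $\mu_0\ge\max\{1,\alpha/\nu_0\}$, which is exactly \eqref{cnd:mu-alpha}. For the mode condition, the lower bound $\alpha\ge\frac12\nu_0$ gives $\frac{4}{\alpha\kappa_0}\le\frac{8}{\nu_0\kappa_0}$, so the assumption \eqref{cnd:N-cor0} that $N\ge\frac{8}{\nu_0\kappa_0}M_V(\bphi_i)$ implies $N\ge\frac{4}{\alpha\kappa_0}M_V(\bphi_i)$, which is \eqref{cnd:Lipschitz-N}. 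Hence Lemma~\ref{lem:Lipschitz} applies and supplies its estimate \eqref{eqn:Lipschitz}.

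Finally I would coarsen that estimate. Using $\frac1\alpha\le\frac{2}{\nu_0}$ on the first term replaces the prefactor $\frac52\cdot\frac1\alpha$ by $5\cdot\frac1{\nu_0}$, producing the first term of the corollary with $\nu_0$ in place of $\alpha$. For the second term, $\frac{\nu_0}{\alpha}\le2$ gives $(\nu_0/\alpha)^2\le4$, whence $\frac52\mu_0^2(\nu_0/\alpha)^2\le10\mu_0^2$, matching the second term. Combining these completes the proof. There is no genuine obstacle here; the only point requiring care is keeping straight which direction each bound on $\alpha$ is invoked—the upper bound $\alpha\le\nu_1$ for the $\mu_0$ hypothesis, and the lower bound $\alpha\ge\frac12\nu_0$ for both the $N$ hypothesis and the coarsening of the final estimate.
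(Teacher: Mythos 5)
Your proof is correct and follows essentially the same route as the paper: the paper's one-line proof likewise substitutes the bounds $\tfrac{\nu_0}{2} \leq \alpha \leq \bar{\gamma} \leq \nu_1$ from Lemma~\ref{lem:Lipschitz} into \eqref{eqn:Lipschitz}. Your version merely makes explicit the verification that \eqref{cnd:mu-cor0} and \eqref{cnd:N-cor0} imply \eqref{cnd:mu-alpha} and \eqref{cnd:Lipschitz-N}, which the paper leaves implicit.
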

	\begin{proof}
		Substituting for $\alpha$ using the bounds 
		\( \frac{\nu_0}{2} \leq \alpha \leq \frac{\gamma_1 + \gamma_2}{2} \leq \nu_1 \)
		proves the corollary.
	\end{proof} 

	We can also allow for $\gamma_1$ and $\gamma_2$ to be arbitrarily large, as long as their relative difference is smaller than $1$.
	
	\begin{corollary}\label{cor:Lipschitz_close} 
		Let \(\bv_1\) and \(\bv_2\) be solutions of \eqref{sys:data-assimilation} with viscosity \( \gamma_1 \geq \nu_0 \) and \( \gamma_2 \geq \nu_0 \) and data \( \bphi_1 \) and  \( \bphi_2 \) respectively.
		If
		\begin{equation}\label{cnd:mu-cor1}
			\mu_0 \geq 1,
		\end{equation}
		and
		\begin{equation}\label{cnd:N-cor1}
		N \geq \frac{8}{\nu_0\kappa_0}
			M_V(\bphi_i)
		\end{equation}
		for \(i = 1,2,\)
		and if 
		\[ \frac{|\gamma_1 - \gamma_2|}{\frac12(\gamma_1 + \gamma_2)} \leq 1, \]
		then
		\begin{equation*}
			\|\bv_1 - \bv_2\|_{C_b(H)}^2 \leq 5 
				\left(\frac{|\gamma_1 - \gamma_2|}{\frac{\gamma_1 + \gamma_2}{2}}\right)^{2-p}
				\frac{\|\frac{1}{2}(\bv_1 + \bv_2)\|_{C_b(V)}^2}{\kappa_0^2N^2} 
			+ 10 \mu_0^2 \|\bphi_1 - \bphi_2\|_{C_b(H)}^2.
		\end{equation*}
	\end{corollary}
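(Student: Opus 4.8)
The plan is to deduce the estimate directly from Lemma~\ref{lem:Lipschitz} by pinning down the size of $\alpha$ using the closeness hypothesis, in the same spirit as Corollary~\ref{cor:Lipschitz_wUpperBound} was obtained by substituting bounds on $\alpha$. The first task is to show that $\frac{|\gamma_1-\gamma_2|}{\frac12(\gamma_1+\gamma_2)} \leq 1$ keeps $\alpha$ within a factor of two of $\bar\gamma$. Indeed, $\frac{|\gamma_1-\gamma_2|}{\bar\gamma} = 2\frac{|\gamma_1-\gamma_2|}{\gamma_1+\gamma_2} \leq 1$, and since this base is at most one and $p \in [0,1]$ we get $\left(2\frac{|\gamma_1-\gamma_2|}{\gamma_1+\gamma_2}\right)^p \leq 1$, so that
\[
\alpha = \bar\gamma\left(1 - \tfrac12\left(2\tfrac{|\gamma_1-\gamma_2|}{\gamma_1+\gamma_2}\right)^p\right) \geq \tfrac12\bar\gamma,
\]
i.e. $\frac{\bar\gamma}{\alpha} \leq 2$. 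Combined with the universal lower bound $\alpha \geq \frac12\nu_0$ from \eqref{eqn:alpha-bounds}, this also gives $\frac{\nu_0}{\alpha} \leq 2$. These are exactly the two ratios that govern the constants in the conclusion \eqref{eqn:Lipschitz}.

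Granting the lemma, the remainder is bookkeeping. For the first term I would rewrite $\frac{|\gamma_1-\gamma_2|^{2-p}}{\bar\gamma^{1-p}} = \bar\gamma\left(\frac{|\gamma_1-\gamma_2|}{\bar\gamma}\right)^{2-p}$, turning the lemma's first term into $\frac52\frac{\bar\gamma}{\alpha}\left(\frac{|\gamma_1-\gamma_2|}{\bar\gamma}\right)^{2-p}\frac{\|\bar{\bv}\|_{C_b(V)}^2}{\kappa_0^2N^2}$, whereupon $\frac{\bar\gamma}{\alpha} \leq 2$ collapses the prefactor to $5$; for the second term, $\frac52\mu_0^2\left(\frac{\nu_0}{\alpha}\right)^2 \leq \frac52\mu_0^2 \cdot 4 = 10\mu_0^2$. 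The lemma's modal hypothesis \eqref{cnd:Lipschitz-N} also transfers: because $\alpha \geq \frac12\nu_0$ we have $\frac{4}{\alpha\kappa_0} \leq \frac{8}{\nu_0\kappa_0}$, so the corollary's assumption \eqref{cnd:N-cor1} implies $N \geq \frac{4}{\alpha\kappa_0}M_V(\bphi_i)$.

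The step I expect to be the main obstacle is the remaining hypothesis \eqref{cnd:mu-alpha} of Lemma~\ref{lem:Lipschitz}, namely $\mu_0 \geq \max\{1,\alpha/\nu_0\}$, which must be produced from the weaker assumption \eqref{cnd:mu-cor1} that only supplies $\mu_0 \geq 1$. In precisely the regime this corollary is meant to allow — $\bar\gamma$ arbitrarily large — closeness forces $\alpha \geq \frac12\bar\gamma$ to be large as well, so $\alpha/\nu_0 > 1$ and $\mu_0 \geq 1$ does not by itself furnish \eqref{cnd:mu-alpha}. I would therefore not invoke the lemma as a black box, but instead return to the single point in its proof where \eqref{cnd:mu-alpha} is used, namely the dropping of the term $\left(\mu - \alpha\kappa_0^2N^2 + \frac12\kappa_0 N\|\bar{\bv}\|\right)|\Proj{\bw}|^2$, and re-examine that step under the present hypotheses. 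The guiding idea is that when $\bar\gamma$ (hence $\alpha$) is large the enhanced natural dissipation $\alpha\|\bw\|^2$ should take over the role that the feedback term plays when $\mu_0 \geq \alpha/\nu_0$, with the stronger cutoff \eqref{cnd:N-cor1} (which via \eqref{prp:W-map_bounded-V} yields $\|\bar{\bv}\| \leq \frac{\nu_0\kappa_0 N}{8}$) keeping the nonlinear contribution subordinate. Making this trade-off quantitative so that the surviving constants are still $5$ and $10\mu_0^2$ is the delicate part of the argument.
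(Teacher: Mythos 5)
Your reduction is exactly the one the paper uses: the paper's proof likewise notes $\alpha \geq \frac12\bar{\gamma}$ under the closeness hypothesis, invokes the universal bound $\alpha \geq \frac12\nu_0$ from \eqref{eqn:alpha-bounds}, transfers \eqref{cnd:Lipschitz-N} from \eqref{cnd:N-cor1}, and substitutes these into \eqref{eqn:Lipschitz} to get the constants $5$ and $10\mu_0^2$. The interesting part is the obstacle you flag, and here you are more careful than the paper. The paper disposes of \eqref{cnd:mu-alpha} by asserting that closeness gives $\frac12\nu_0 \leq \alpha \leq \nu_0$; the upper half of that assertion is simply false whenever $\bar{\gamma} > 2\nu_0$ (take $\gamma_1 = \gamma_2 = 10\nu_0$, so $\alpha \geq \frac12\bar{\gamma} = 5\nu_0$), so the published proof is defective at precisely the step you isolated. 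You have not missed the paper's idea; you have located an error in it.

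That said, your proposal is also incomplete, and the repair you sketch cannot succeed with the stated constants. The term $\left(\mu - \alpha\kappa_0^2N^2 + \frac12\kappa_0 N\|\bar{\bv}\|\right)|\Proj{\bw}|^2$ can only be controlled on the low modes by $\mu = \mu_0\nu_0\kappa_0^2N^2$ together with the Poincar\'e rate $\alpha\kappa_0^2$ coming from $\alpha\|\bw\|^2$; the Gr\"onwall decay rate available is therefore at most of order $(\mu_0\nu_0 N^2 + \bar{\gamma})\kappa_0^2$, whereas matching the first term of the conclusion requires a rate of order $\bar{\gamma}\kappa_0^2N^2$. Once $\bar{\gamma} \gg \mu_0\nu_0$ this is impossible, and in fact the corollary's conclusion itself fails there. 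A concrete counterexample in the spirit of the paper's own single-mode example: take $\kappa_0 = \nu_0 = \mu_0 = 1$ (so $\mu = N^2$), $\f = 0$, $N = 25$, $\bk = (3,4)$ (so $|\bk|^2 = 25 < N^2$), $\bphi_1 = \bphi_2 = \bphi = 2\epsilon\, \bk^\perp \cos(\kappa_0\bk\cdot\bx)$ with $\epsilon$ small enough that \eqref{cnd:N-cor1} holds, and $\gamma_1 = \frac32 N$, $\gamma_2 = \frac12 N$, so that $|\gamma_1-\gamma_2|/\bar{\gamma} = 1$ is allowed. Since the nonlinearity vanishes on a single shear mode, the stationary fields $\bv_i = \mu\bphi/(\gamma_i\kappa_0^2|\bk|^2 + \mu)$ solve \eqref{sys:data-assimilation}, and because $\bphi_1 = \bphi_2$ the claimed bound reduces (for every $p\in[0,1]$) to
\begin{equation*}
	\kappa_0^2 N^2 \bar{\gamma}^2\, \kappa_0^2|\bk|^2 \;\leq\; 5\left(\bar{\gamma}\kappa_0^2|\bk|^2 + \mu\right)^2,
	\qquad\text{i.e.}\qquad 25^5 \leq 5\,(625 + 625)^2 = 20\cdot 25^4,
\end{equation*}
which is false, with the violation growing like $N$ as $N$ increases (keeping $\bar{\gamma} = N$, $|\bk|^2 = N$). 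So the corollary as stated needs amending — either retain $\mu_0 \gtrsim \bar{\gamma}/\nu_0$ (essentially reverting to Corollary~\ref{cor:Lipschitz_wUpperBound}) or accept a first-term constant that degrades with $\bar{\gamma}/(\mu_0\nu_0)$. Your write-up correctly stops short of claiming a proof, but be aware the "delicate part" you defer is not merely delicate: it is unfixable as stated.
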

	\begin{proof}
	Using the same notation as before, we have by assumption \( \frac{|\gamma_1 - \gamma_2|}{\bar{\gamma}} \leq 1, \) so
	\( \frac12 \nu_0 \leq \alpha \leq \nu_0, \) hence \eqref{cnd:mu-alpha} and \eqref{cnd:Lipschitz-N} are satisfied by \eqref{cnd:mu-cor1} and \eqref{cnd:N-cor1}. Also,
		\begin{equation*}
		\alpha \geq \bar{\gamma}\left(
			1 - \frac12 \max\left\{1, 2 \frac{|\gamma_1 - \gamma_2|}{\gamma_1 + \gamma_2}\right\}
		\right) 
			= \frac12 \bar{\gamma}.
		\end{equation*}
	Substituting this lower bound for $\alpha$ in \eqref{eqn:Lipschitz} along with the more general lower bound, $\alpha > \frac12\nu_0$, gives the result.
	\end{proof} 
	
	We can now show the well-posedness of the determining map.

	\begin{theorem}\label{thm:determining-map} 
		Let $R > 0$ and $\nu_0 > 0$, and let \( \f \in C_b(V). \) 
		If
		\begin{equation}\label{cnd:N-det-thm}
		N \geq \frac{8}{\nu_0\kappa_0}
			\sqrt{2}\left(\frac{1}{\nu_0 \kappa_0^2 N^2}\|\f\|_{C_b(V)} 
				+ \mu_0 R\right),
		\end{equation}
		for any $\mu_0 \geq 1$ with $\mu$ given by \eqref{eqn:mu-param}, then 
		$\W$
		as defined by Definition~\ref{def:detmap} is well-defined.
	\end{theorem}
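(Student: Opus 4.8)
The plan is to read ``well-defined'' as the conjunction of two assertions: for every $(\gamma,\bphi)\in(\nu_0,\infty)\times B_R(0)$ there exists a solution $\bv\in C_b(V)$ of \eqref{sys:data-assimilation} that is bounded and complete (defined for all $t\in\mathbb R$, i.e. lying on the attractor of the nudged system), and this solution is unique. Throughout I would use that $\bphi\in B_R(0)$ means $\|\bphi\|_{C_b(V)}\le R$, and that $\mu_0\ge1$ so that Lemma~\ref{lem:apriori-bounds} applies for every $\gamma\ge\nu_0$.

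For existence, I would first observe that \eqref{sys:data-assimilation} is a two-dimensional Navier--Stokes system perturbed by the bounded zeroth-order feedback $-\mu\Proj{\bv}$ and the time-dependent forcing $\f+\mu\bphi\in C_b(V)$; the standard $2$D theory therefore furnishes, for each initial datum in $H$, a unique solution on $[0,\infty)$ with the regularity \eqref{propssolNSE}. Lemma~\ref{lem:apriori-bounds} then shows this process is uniformly dissipative, every solution being eventually trapped in the $V$-ball of radius $M_V(\bphi)$. I would produce the required complete trajectory as a pullback limit: letting $\bv_s$ be the solution with $\bv_s(s)=0$ and comparing $\bv_{s_1},\bv_{s_2}$ (same $\gamma$, same $\bphi$) through the Gr\"onwall step of Lemma~\ref{lem:Lipschitz}, the exponential factor $e^{-\frac25\alpha\kappa_0^2N^2(t-s)}$ shows $\{\bv_s\}$ is Cauchy on compact intervals as $s\to-\infty$; the limit is a complete solution, and it lies in $C_b(V)$ by \eqref{prp:W-map_bounded-V}. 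Equivalently, the uniform absorbing set guarantees a (pullback) attractor whose trajectories supply the bounded complete solution.

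For uniqueness I would invoke Corollary~\ref{cor:Lipschitz_close} with $\gamma_1=\gamma_2=\gamma$ and $\bphi_1=\bphi_2=\bphi$. Then the relative difference $|\gamma_1-\gamma_2|/\bar\gamma=0\le1$ is admissible (so $\gamma\ge\nu_0$ may be taken arbitrarily large), the hypothesis \eqref{cnd:mu-cor1} holds because $\mu_0\ge1$, and for any $p\in[0,1]$ both terms on the right of the corollary's estimate vanish, giving $\|\bv_1-\bv_2\|_{C_b(H)}^2\le0$. Since both solutions lie in $C_b(V)\subset C_b(H)$ this yields $\bv_1=\bv_2$, hence uniqueness of the complete solution. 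It remains only to check that \eqref{cnd:N-det-thm} supplies the resolution hypothesis \eqref{cnd:N-cor1}.

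The main obstacle is precisely this last verification, complicated by the fact that $N$ appears on both sides of \eqref{cnd:N-det-thm}. The clean route is to use the sharp form of the a priori bound \eqref{prp:W-map_bounded-V}, $\|\bv\|_{C_b(V)}\le\sqrt2\big(\tfrac{1}{\nu_0\kappa_0^2N^2}\|\f\|_{C_b(V)}+\mu_0\|\bphi\|_{C_b(V)}\big)$, obtained from $\sqrt{a^2+b^2}\le a+b$; bounding $\|\bphi\|_{C_b(V)}\le R$ shows that the right-hand side of \eqref{cnd:N-det-thm} equals $\tfrac{8}{\nu_0\kappa_0}$ times an upper bound for $\|\bv\|_{C_b(V)}$ uniform over $\bphi\in B_R(0)$. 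Since $\alpha\ge\tfrac12\nu_0$ by \eqref{eqn:alpha-bounds}, so that $\tfrac{4}{\alpha\kappa_0}\le\tfrac{8}{\nu_0\kappa_0}$, this delivers $N\ge\tfrac{4}{\alpha\kappa_0}\|\bar{\bv}\|_{C_b(V)}$, which is exactly the inequality \eqref{eqn:N-controls-v} needed in the contraction argument. Because this bound is uniform over the ball and over $\gamma\ge\nu_0$, existence and uniqueness hold at every point of the domain, establishing that $\W$ is well-defined.
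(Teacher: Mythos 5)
Your proposal is correct and, at its core, follows the same route as the paper: uniqueness is obtained exactly as in the paper's proof, by applying Corollary~\ref{cor:Lipschitz_close} with $\gamma_1=\gamma_2=\gamma$ and $\bphi_1=\bphi_2=\bphi$, concluding $\|\bv_1-\bv_2\|_{C_b(H)}=0$ and hence $\bv_1=\bv_2$. You differ in two places, both to your credit. First, for existence the paper only says ``take a Galerkin truncation and pass to the limit as in the classical 2D case''; your pullback construction (solve the initial value problem from time $s$ with zero data, use the Gr\"onwall contraction of Lemma~\ref{lem:Lipschitz} to show the family is Cauchy as $s\to-\infty$, and pass to the limit) is equally standard but better matched to the actual claim, since a Galerkin limit by itself produces IVP solutions, whereas Definition~\ref{def:detmap} requires a bounded complete trajectory on all of $\mathbb{R}$. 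Second, the paper dismisses the hypothesis-checking with ``note that the conditions of the corollary are satisfied,'' which is not literally true: \eqref{cnd:N-det-thm} does not imply \eqref{cnd:N-cor1}, because $M_V$ in \eqref{eqn:M_V} carries $\frac{1}{\nu_0\kappa_0^2}\|\f\|_{C_b(V)}$ without the factor $N^{-2}$. You correctly identify that what the proofs of Lemma~\ref{lem:Lipschitz} and Corollary~\ref{cor:Lipschitz_close} actually use is \eqref{eqn:N-controls-v}, and you verify that \eqref{cnd:N-det-thm} yields it via the sharp form of \eqref{prp:W-map_bounded-V} together with $\alpha\ge\frac12\nu_0$. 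This is precisely the reading under which the theorem's self-referential condition on $N$ is sufficient, and it is a genuine improvement in rigor over the paper's one-line assertion.

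One caveat, which you inherit from the paper rather than introduce: the proof of Corollary~\ref{cor:Lipschitz_close} itself relies on condition \eqref{cnd:mu-alpha} of Lemma~\ref{lem:Lipschitz}, i.e.\ $\mu_0\ge\alpha/\nu_0$; when $\gamma_1=\gamma_2=\gamma$ is much larger than $\nu_0$, one has $\alpha\ge\frac12\gamma$, so $\mu_0\ge1$ does not supply this, and the corollary's claim ``$\frac12\nu_0\le\alpha\le\nu_0$'' fails. The uniqueness conclusion still holds (a direct energy estimate on the difference equation, using that larger $\gamma$ only adds dissipation, closes the argument under the same condition on $N$), but since both your proof and the paper's invoke the corollary as a black box at exactly this point, this is a defect of the cited corollary, not a gap distinguishing your argument from the paper's.
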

	\begin{proof}
		The existence of solutions on the attractor for any given pair $(\gamma, \bphi)$ on the domain of $\W$ can be shown by first taking a Galerkin truncation and proceeding to the limit as is done in the classical case for the 2D Navier--Stokes, and so we omit the details here.

		The uniqueness of solutions follows from Corollary~\ref{cor:Lipschitz_close}: for any $(\gamma, \bphi)\in(\nu_0,\infty)\times B_R(0)$, note that the conditions of the corollary are satisfied. Therefore, given two solutions $\bv_1$ and $\bv_2$ of \eqref{sys:data-assimilation} corresponding to $\gamma$ and $\bphi$, by Corollary~\ref{cor:Lipschitz_close} we have
		\[ \|\bv_1 - \bv_2\|_{C_b(H)} = 0, \]
		therefore, by the embedding $V\subset H$,
		\[ \|\bv_1 - \bv_2\|_{C_b(V)} = 0. \]
	\end{proof} 

	\begin{remark}
	Theorem~\ref{thm:determining-map} also provides an alternate proof of the classical result that there are finitely many determining modes for the Navier--Stokes equations. To see that this is true, suppose \( \bu \) and \( \bv \) are two solutions of \eqref{sys:Navier--Stokes} in $C_b(V)$ with viscosity $\nu$, which agree at low modes, i.e. \( \Proj{\bu} = \Proj{\bv} \) for all $t\in\mathbb{R}$. If $N$ satisfies the conditions of Theorem~\ref{thm:determining-map} with 
	\( \nu_0 = \nu, \) 
	\( R = \max\{\|\bu\|_{C_b(V)}, \|\bv\|_{C_b(V)}\}, \)
	and $\mu_0 = 1$, then
	\[ \bu = \Wmap{\nu}{\Proj{\bu}} = \Wmap{\nu}{\Proj{\bv}} = \bv. \]
	\end{remark}


\section{The Parameter Recovery Inverse Problem}\label{sec:parameter-recovery}
	Our goal in this section is to define and study the following optimization problem.
	\begin{problem}\label{pbl:main-optimization}
		Let $\Proj{\bu}$ be given on $\mathbb{R}$. Define the cost functional, $\Cost$, by
		\begin{equation}
			\Cost(\gamma) = \| \Proj{\Wmap{\gamma}{\Proj{\bu}}} - \Proj{\bu} \|_{C_b(H)}.
		\end{equation}
		Find $\gamma^* > 0$ such that
		\begin{equation*}
			\Cost(\gamma^*) = \min_{\gamma > 0} \Cost(\gamma).
		\end{equation*}
	\end{problem}

	As motivation, consider the following:
	\begin{fact}\label{sec:Motivation}
	If $\bu \in C_b(V)$ is a solution of \eqref{sys:Navier--Stokes} with viscosity $\nu$, then
	\begin{equation*}
		\min_{\gamma > 0} \| \Wmap{\gamma}{\Proj{\bu}} - \bu \|_{C_b(H)}
	\end{equation*}
	has a unique solution, namely $\nu$, provided that $\mu$ and $N$ satisfy the conditions of Theorem~\ref{thm:determining-map} with \( R = \|\bu\|_{C_b(V)}, \) and that $\f \not\equiv 0$. 
	\end{fact}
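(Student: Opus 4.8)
The plan is to show that the nonnegative objective $J(\gamma) := \|\Wmap{\gamma}{\Proj{\bu}} - \bu\|_{C_b(H)}$ attains the value $0$ exactly at $\gamma = \nu$. The key observation is that $\bu$ is itself a solution of the data-feedback system \eqref{sys:data-assimilation} when the viscosity guess equals the true value $\nu$ and the data equals its own projection $\Proj{\bu}$: substituting $\bv = \bu$, $\gamma = \nu$, and $\bphi = \Proj{\bu}$ into \eqref{sys:data-assimilation}, the feedback term $\mu(\Proj{\bu} - \Proj{\bu})$ vanishes identically and the equation reduces to \eqref{sys:Navier--Stokes} with viscosity $\nu$, which $\bu$ satisfies by hypothesis. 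Since $\bu \in C_b(V)$ is globally bounded and $\|\Proj{\bu}\|_{C_b(V)} \le \|\bu\|_{C_b(V)} = R$, so that $\Proj{\bu}$ lies in the domain of $\W$, the uniqueness part of Theorem~\ref{thm:determining-map} (applied with $\nu_0 = \nu$) identifies $\bu$ with the determining-map solution, i.e. $\Wmap{\nu}{\Proj{\bu}} = \bu$. Hence $J(\nu) = 0$, and because $J \ge 0$ the value $\nu$ is a global minimizer.

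To see that $\nu$ is the \emph{only} minimizer, observe that any minimizer $\gamma^*$ must also achieve the minimum value $0$, so $\Wmap{\gamma^*}{\Proj{\bu}} = \bu$. Running the cancellation argument again with $\gamma^*$ in place of $\nu$, the function $\bu = \Wmap{\gamma^*}{\Proj{\bu}}$ solves \eqref{sys:data-assimilation} with viscosity $\gamma^*$ and data $\Proj{\bu}$; the feedback cancels once more, leaving $\frac{d}{dt}\bu + \gamma^* A\bu + \B{\bu}{\bu} = \f$. Subtracting the Navier--Stokes equation satisfied by $\bu$ with viscosity $\nu$ eliminates the time derivative and the nonlinear term, yielding $(\gamma^* - \nu)\,A\bu(t) = 0$ for all $t \in \R$.

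Finally I would invoke the hypothesis $\f \not\equiv 0$. If $\gamma^* \ne \nu$, then $A\bu(t) = 0$ for every $t$; since $A$ is a positive, self-adjoint operator with trivial kernel on $H$, this forces $\bu \equiv 0$, whereupon \eqref{sys:Navier--Stokes} gives $\f = \frac{d}{dt}\bu + \nu A\bu + \B{\bu}{\bu} \equiv 0$, contradicting $\f \not\equiv 0$. Therefore $\gamma^* = \nu$, and the minimizer is unique.

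I expect the single delicate step to be the rigorous identification $\Wmap{\nu}{\Proj{\bu}} = \bu$: one must verify that $\bu$ genuinely falls within the hypotheses of Theorem~\ref{thm:determining-map} — that $\Proj{\bu}$ belongs to the ball $B_R(0)$ and that $\bu$ is the globally bounded (attractor) solution — so that the uniqueness conclusion applies and pins $\W$ to $\bu$ rather than to some other solution. Once this is in place, the remaining manipulations (cancellation of the feedback term and the subtraction producing $(\gamma^* - \nu)A\bu = 0$) are routine. A minor bookkeeping point is that the objective is defined only on the domain $[\nu_0,\infty)$ of $\W$, so the minimization is understood over that range; since $J \ge 0$ with equality only at $\nu$, this does not affect the conclusion.
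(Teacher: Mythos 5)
Your proof is correct and takes essentially the same approach as the paper: establish $\Wmap{\nu}{\Proj{\bu}} = \bu$ via Theorem~\ref{thm:determining-map} so the cost vanishes at $\gamma = \nu$, then show that any zero-cost $\gamma$ forces the viscosity difference times a Stokes term to vanish, and rule out $\bu \equiv 0$ using $\f \not\equiv 0$. The only cosmetic difference is that the paper subtracts the two scalar energy identities (obtained by pairing each equation with its own solution), arriving at $(\gamma - \nu)\|\bu\|^2 = 0$, whereas you subtract the equations directly in $V'$ to get $(\gamma^* - \nu)A\bu = 0$ and invoke injectivity of $A$ --- your identity tested against $\bu$ is exactly the paper's.
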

	\begin{proof}
		Choosing $\gamma = \nu$ makes the cost exactly $0$, as $\bu$ solves \eqref{sys:data-assimilation} when $\gamma = \nu$, and therefore, by Theorem~\ref{thm:determining-map}, $\Wmap{\nu}{\Proj{\bu}} = \bu$. To see that this solution is unique, suppose that 
		\( \bv:= \Wmap{\gamma}{\Proj{\bu}} \) satisfies \( \| \bv - \bu \|_{C_b(H)} = 0. \)
		Then  
		\[ \NSE{\bv}{\gamma} = \f + \Proj{\bu} - \Proj{\bv}. \]
		Taking an inner-product with $\bv$, we obtain
		\[ \frac12 \frac{d}{dt} |\bv|^2 + \gamma \|\bv\|^2 = \left<\f, \bv\right> + \left<\Proj{\bu}, \bv\right> - |\Proj{\bv}|^2, \]
		whereas, by the definition of $\bu$ we have
		\[ \NSE{\bu}{\nu} = \f, \]
		and taking an inner-product with $\bu$, we get
		\[ \frac12 \frac{d}{dt} |\bu|^2 + \nu \|\bu\|^2 = \left<\f, \bu\right>. \]

		Now, by assumption, $|\bv - \bu| = 0$ for each $t$, so
		$\left<\f, \bv\right> = \left<\f, \bu\right>$,
		$\left<\Proj{\bu}, \bv\right> = |\Proj{\bv}|^2$,
		and
		\( \|\bu\| = \|\bv\|. \)
		Therefore, taking the difference of the two energy equations, we see that for all $t \in (-\infty, \infty)$,
		\[ (\gamma - \nu) \|\bu\| = 0. \] 
		This can only hold if $\bu \equiv 0$ or if $\gamma = \nu$. However, $\bu \equiv 0$ only if 
		$\f \equiv 0$. Therefore $\gamma = \nu$.
	\end{proof}
	
	Note that the minimization problem presented in Fact~\ref{sec:Motivation} is not useful, because to calculate the cost we would need to know $\bu$ exactly. In which case, rather than solving the minimization problem using $\Wmap{\cdot}{\cdot}$, we could compute $\nu$ directly by averaging the energy equation over $[-T, T]$ for any $T > 0$:
	\begin{equation}\label{eqn:nu-direct-estimate}
		\nu = \frac{ \int_{-T}^{T} \left<\f, \bu\right> - \left(|\bu(T)|^2 - |\bu(-T)|^2\right)}{\int_{-T}^{T} \|\bu\|^2}.
	\end{equation}
	The point of posing Problem~\ref{pbl:main-optimization} is to obtain a similar result while requiring a minimal amount of data. 
	Note that \eqref{eqn:nu-direct-estimate} could be modified by replacing $\bu$ with $\Proj \bu$, which is essentially the approach taken in \cite{CialencoGlattHoltz2011}, but for the stochastically forced NSE. In contrast, rather than use the data $\Proj \bu$ directly, we leverage the convergence properties of the determining map with finite $N$ to solve for $\nu$.

\section{Uniqueness of the Inverse Problem}
	Given the same data, the Lipschitz properties we proved in Section~\ref{sec:determining-map} show that closeness in the viscosity implies smallness of the cost function, $\Cost$. Now that we are considering the inverse problem, we are interested in the converse: when does smallness in the cost function imply closeness to the true viscosity? We give conditions for the converse to be true in Section~\ref{sec:nonzero-loss}, but first, we directly address the simpler question of uniqueness of the solution, $\gamma = \nu$, to Problem~\ref{pbl:main-optimization}.

	In the following lemma, we provide an inequality bounding the difference between two viscosities which, given the same data $\bphi$, map (via $\W$) to trajectories with identical projections on \( range(\Proj{}). \) We leverage this result to show the uniqueness of solutions of Problem~\ref{pbl:main-optimization} in the subsequent theorem.
	\begin{lemma}\label{lem:viscosity-error} 
		Let $\f,\bphi\in C_b(V)$ and \( \gamma_1, \gamma_2 \in [\nu_0, \nu_1]\), and fix
		\( \mu_0 \geq \frac{\nu_1}{\nu_0} \) and 
		\( N \geq \frac{8}{\nu_0\kappa_0} M_V(\bphi). \)
		If
		\begin{equation}
			\|\Proj{\Wmap{\gamma_1}{\bphi}} - \Proj{\Wmap{\gamma_2}{\bphi}}\|_{C_b(H)} = 0,
		\end{equation}
		then either \( |\gamma_1 - \gamma_2| = 0, \) or for any $p\in[0,1]$, 
		\begin{equation}
			\left(\frac{|\gamma_1 - \gamma_2|}{\frac{\gamma_1 + \gamma_2}{2}}\right)^{p/2}
			\|\bpsi\|_{C_b(H)}\, N
			\leq  \frac{2 \sqrt{5c}}{\nu_0\kappa_0^2 \sqrt{|\Omega|} } 
				\sqrt{\ln(N+1)} 
				M_H( \bphi) M_V( \bphi),
		\end{equation}
		where \( \bpsi := \Proj{\Wmap{\gamma_1}{\bphi}} = \Proj{\Wmap{\gamma_2}{\bphi}}. \)
	\end{lemma}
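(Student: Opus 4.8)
The plan is to exploit the hypothesis, which forces the two assimilated solutions to share the same low-mode projection and thereby collapses the difference equation into a constraint that is \emph{algebraic} in time, from which $|\gamma_1 - \gamma_2|$ can be extracted. Write $\bv_i = \Wmap{\gamma_i}{\bphi}$ and set $\bw = \bv_1 - \bv_2$, $\bar{\bv} = \frac12(\bv_1+\bv_2)$, $\bar{\gamma} = \frac12(\gamma_1+\gamma_2)$. Subtracting the two copies of \eqref{sys:data-assimilation} (same $\f$, $\mu$, and data) gives, exactly as in Lemma~\ref{lem:Lipschitz},
\[ \partial_t \bw + \bar{\gamma} A \bw + (\gamma_1 - \gamma_2) A \bar{\bv} + \B{\bar{\bv}}{\bw} + \B{\bw}{\bar{\bv}} = -\mu \Proj{\bw}. \]
The hypothesis $\|\Proj{\bv_1} - \Proj{\bv_2}\|_{C_b(H)} = 0$ says $\Proj{\bw} \equiv 0$ for all $t$. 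Writing $\bpsi = \Proj{\bar{\bv}}$ and applying $\Proj$ to the difference equation annihilates $\partial_t \Proj{\bw}$, the term $\bar{\gamma}\Proj{} A\bw = \bar{\gamma} A \Proj{\bw}$, and the feedback term, using $\Proj{} A \bar{\bv} = A\bpsi$. This leaves the pointwise-in-time identity
\[ (\gamma_1 - \gamma_2) A\bpsi = -\Proj{}\left(\B{\bar{\bv}}{\bw} + \B{\bw}{\bar{\bv}}\right). \]

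The crucial step is the choice of test function. Rather than pairing with $\bpsi$ (which produces the $V$-norm $\|\bpsi\|^2$ and the wrong power of $N$), I pair with $A^{-1}\bpsi$; this is legitimate since $\bpsi$ is mean-free and supported on $|\bk| < N$, so $A^{-1}\bpsi$ is supported on the same modes. Because $(A\bpsi, A^{-1}\bpsi) = |\bpsi|^2$, and $A^{-1}\bpsi$ being low-mode lets me drop $\Proj{}$ by self-adjointness, and using the antisymmetry $\b{\bu}{\bv}{\bw} = -\b{\bu}{\bw}{\bv}$ (valid for divergence-free $\bu$, applicable since both $\bar{\bv}$ and $\bw$ are divergence-free) to shift the derivative onto the smooth factor, I obtain
\[ (\gamma_1 - \gamma_2)|\bpsi|^2 = \left(\bar{\bv}\cdot\nabla A^{-1}\bpsi, \bw\right) + \left(\bw\cdot\nabla A^{-1}\bpsi, \bar{\bv}\right), \]
whence, placing the smooth factor in $L^\infty$, $|\gamma_1 - \gamma_2|\,|\bpsi|^2 \le 2\|\nabla A^{-1}\bpsi\|_{L^\infty}\,|\bar{\bv}|\,|\bw|$.

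The logarithm now enters through the key band-limited estimate. The Fourier coefficients of $\nabla A^{-1}\bpsi$ are $-i\bk(\kappa_0|\bk|^2)^{-1}\hat{\bpsi}_\bk$, so bounding the sup-norm by the $\ell^1$ norm of the coefficients and applying Cauchy--Schwarz with the weight $\sum_{0 < |\bk| < N}|\bk|^{-2} \le c\ln(N+1)$ yields
\[ \|\nabla A^{-1}\bpsi\|_{L^\infty} \le \frac{\sqrt{c}}{\kappa_0\sqrt{|\Omega|}}\sqrt{\ln(N+1)}\,|\bpsi|. \]
Substituting this, cancelling one factor $|\bpsi|$ (the bound is trivial at times where $\bpsi$ vanishes), and taking the supremum over $t$ reduces the claim to
\[ |\gamma_1 - \gamma_2|\,\|\bpsi\|_{C_b(H)} \le \frac{2\sqrt{c}}{\kappa_0\sqrt{|\Omega|}}\sqrt{\ln(N+1)}\,\|\bar{\bv}\|_{C_b(H)}\,\|\bw\|_{C_b(H)}. \]

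Finally I insert the Lipschitz bound for $\|\bw\|_{C_b(H)}$ from Lemma~\ref{lem:Lipschitz}, whose hypotheses \eqref{cnd:mu-alpha} and \eqref{cnd:Lipschitz-N} hold here because $\mu_0 \ge \nu_1/\nu_0 \ge \alpha/\nu_0$ and $\alpha \ge \frac12\nu_0$, so that $\frac{8}{\nu_0\kappa_0}M_V \ge \frac{4}{\alpha\kappa_0}M_V$. With $\bphi_1 = \bphi_2$ the data term in \eqref{eqn:Lipschitz} drops and, using $\alpha \ge \frac12\nu_0$ and $\|\bar{\bv}\|_{C_b(V)} \le M_V$ (Lemma~\ref{lem:apriori-bounds}), it reads $\|\bw\|_{C_b(H)} \le \sqrt{5}\,|\gamma_1-\gamma_2|^{(2-p)/2}\bar{\gamma}^{-(1-p)/2} M_V / (\sqrt{\nu_0}\,\kappa_0 N)$; I also bound $\|\bar{\bv}\|_{C_b(H)} \le M_H$. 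Collecting powers gives $|\gamma_1-\gamma_2|^{1-(2-p)/2} = |\gamma_1-\gamma_2|^{p/2}$; multiplying through by $N$ and then by $\bar{\gamma}^{-p/2}$ to match the left side, and using $\bar{\gamma} \ge \nu_0$ to absorb $\bar{\gamma}^{-(1-p)/2}\bar{\gamma}^{-p/2} = \bar{\gamma}^{-1/2}$ against $\nu_0^{-1/2}$, produces the stated inequality (with the constant $\sqrt{c}$ combining to $\sqrt{5c}$). The main obstacle is exactly the middle two steps: recognizing that testing against $A^{-1}\bpsi$ rather than $\bpsi$ simultaneously delivers the $H$-norm of $\bpsi$ on the left and renders the test field smooth enough that its gradient carries only a \emph{logarithmic}, rather than polynomial, dependence on $N$ — this logarithmic gain is the entire content of the estimate.
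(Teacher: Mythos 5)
Your proof is correct, and it follows the same overall skeleton as the paper's: exploit $\Proj{\bw}\equiv 0$ to collapse the projected difference equation into a time-pointwise identity for $(\gamma_1-\gamma_2)$, extract a $\sqrt{\ln(N+1)}$ factor from the weighted sum $\sum_{0<|\bk|\le N}|\bk|^{-2}$, and close with the Lipschitz bound (your verification that \eqref{cnd:mu-alpha} and \eqref{cnd:Lipschitz-N} follow from $\mu_0\ge\nu_1/\nu_0$ and $\alpha\ge\frac12\nu_0$ is exactly what Corollary~\ref{cor:Lipschitz_wUpperBound} encapsulates, and your power bookkeeping, including the absorption of $\bar{\gamma}^{-1/2}$ by $\nu_0^{-1/2}$, reproduces the stated constant $2\sqrt{5c}$). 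Where you genuinely diverge is in the execution of the central estimate. The paper never tests against $A^{-1}\bpsi$ explicitly: it works mode by mode in Fourier space, bounds the nonlinear convolutions by Cauchy--Schwarz and Parseval, multiplies the $\bk$-th modal equation by $|\F{\bpsi}(\bk)|\,|\Omega|/|\bk|^2$, and performs a weighted Cauchy--Schwarz over $0<|\bk|\le N$. Your physical-space version --- pairing with $A^{-1}\bpsi$, using the antisymmetry $\b{\bu}{\bv}{\bw}=-\b{\bu}{\bw}{\bv}$ to shift derivatives onto the band-limited test field, and then the Bernstein-type bound $\|\nabla A^{-1}\bpsi\|_{L^\infty}\le \frac{\sqrt{c}}{\kappa_0\sqrt{|\Omega|}}\sqrt{\ln(N+1)}\,|\bpsi|$ --- is the structural (modulus-free) form of the same computation: multiplying by $|\F{\bpsi}(\bk)|/|\bk|^2$ and summing \emph{is} testing against $A^{-1}\bpsi$ up to taking absolute values mode by mode. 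Your organization makes the source of the logarithm conceptually transparent (the test function, not the solution, carries the $N$-dependence) and avoids writing convolutions, at the cost of invoking the trilinear form's antisymmetry and an $\ell^1$-Fourier embedding; the paper's computation is more pedestrian but entirely self-contained at the level of Fourier coefficients. One small caveat: your parenthetical claim that pairing with $\bpsi$ would give ``the wrong power of $N$'' is not something the paper's argument contradicts or relies on --- the paper's weighting by $|\bk|^{-2}$ already plays the role of your $A^{-1}$ --- but this does not affect the validity of your proof.
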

	\begin{proof}
		Let $\bv_1 = \Wmap{\gamma_1}{\bphi}$ and $\bv_2 = \Wmap{\gamma_2}{\bphi}$, and assume that $\Proj{\bv_1} = \Proj{\bv_2} =: \bpsi$. Let $\bw = \bv_1 - \bv_2$ (note that $\Proj{\bw}$ is therefore $0$). 
		Writing \eqref{sys:data-assimilation} in Fourier space, for any $\bk\in \mathbb{Z}^2$, $|\bk| \leq N$, we have
		\[
			\NSEFourier{\bv_1}{\gamma_1} = \F{\f}(\bk) + \mu \F{\bphi}(\bk) - \mu \F{\bv_1}(\bk)
		\]
		and
		\[
			\NSEFourier{\bv_2}{\gamma_2} = \F{\f}(\bk) + \mu \F{\bphi}(\bk) - \mu \F{\bv_2}(\bk).
		\]
		Subtracting these equations and using the fact that $\F{\bv_1}(\bk) = \F{\bv_2}(\bk) = \F{\bpsi}(\bk)$ and $\F{\bw}(\bk) = 0$ for $|\bk| \leq N$, we find
		\[
			(\gamma_1 - \gamma_2)\kappa_0^2|\bk|^2\F{\bpsi}(\bk) + \conv{\bv_1}{\bw} + \conv{\bw}{\bv_2} = 0.
		\]

		Next we estimate the convolutions. First, we may write
		\[
			|\gamma_1 - \gamma_2| \kappa_0 |\bk|^2 |\F{\bpsi}(\bk)| \leq 
			\sum_{\bh} |\bk| |\F{\bv_1}(\bh)| |\F{\bw}(\bk-\bh)| + \sum_{\bh} |\bk| |\F{\bv_2}(\bk-\bh)| |\F{\bw}(\bh)|.
		\]
		Then, applying the Cauchy-Schwarz inequality and Parseval's theorem, we obtain
		\[
			|\gamma_1 - \gamma_2| \kappa_0 |\bk|^2 |\F{\bpsi}(\bk)| 
				\leq |\bk|\frac{1}{|\Omega|} |\bv_1| |\bw| + |\bk| \frac{1}{|\Omega|}|\bv_2| |\bw|.
		\]
		Now, for any $\bk \neq 0$, we can multiply both sides by $\frac{|\F{\bpsi}(\bk)|}{|\bk|^2}|\Omega|$,
		\[
			|\gamma_1 - \gamma_2| \kappa_0 |\F{\bpsi}(\bk)|^2 |\Omega|
				\leq \frac{1}{|\bk|}|\F{\bpsi}(\bk)| (|\bv_1| + |\bv_2|) |\bw|,
		\]
		and taking the sum over $\bk\in\mathbb{Z}^2, 0 < |\bk| \leq N,$ and using the Cauchy-Schwarz inequality,
		\[
			|\gamma_1 - \gamma_2| \kappa_0 |\Proj{\bpsi}|^2
				\leq \left(\sum_{0 < |\bk| \leq N}\frac{1}{|\bk|^2}\right)^\frac{1}{2} \frac{1}{\sqrt{|\Omega|}}|\Proj{\bpsi}| (|\bv_1| + |\bv_2|) |\bw|.
		\]
		Using the estimate
		\[
			\sum_{0 < |\bk| \leq N}\frac{1}{|\bk|^2} \leq 8 + 2\pi(1 + \ln(N)) \leq c \ln(N + 1),
		\]
		and noting that \( \bpsi = \Proj{\bpsi} \), we have
		\[
			|\gamma_1 - \gamma_2| \kappa_0 \sqrt{|\Omega|} |\bpsi|^2
				\leq \sqrt{c \ln(N+1)} |\bpsi| (|\bv_1| + |\bv_2|) |\bw|.
		\]

		Finally, by Corollary~\ref{cor:Lipschitz_wUpperBound},
		\[
		|\bw(t)| \leq \sqrt{5}
				\frac{|\gamma_1 - \gamma_2|^{1-p/2}}{\left(\frac{\gamma_1 + \gamma_2}{2}\right)^{1/2-p/2}}
				\frac{\|\frac{1}{2}(\bv_1 + \bv_2)\|_{C_b(V)}}{\sqrt{\nu_0}\kappa_0 N},
		\]
		and using Lemma~\ref{lem:apriori-bounds}, we can bound the norms involving $\bv_1$ and $\bv_2$. The resulting inequality is
		\begin{multline*}
			\left(\frac{|\gamma_1 - \gamma_2|}{\frac{\gamma_1 + \gamma_2}{2}}\right)^{p/2}
			|\gamma_1 - \gamma_2| |\bpsi|^2
			\\
			\leq  |\gamma_1 - \gamma_2| |\bpsi| 
				\frac{2 \sqrt{5}}{\nu_0\kappa_0^2 \sqrt{|\Omega|} } 
				\frac{\sqrt{c \ln(N+1)}}{N} 
				M_H( \bphi) M_V( \bphi),
		\end{multline*}
		so either \( |\gamma_1 - \gamma_2| = 0 \) (taking care in the case where $p = 0$), or
		\[
			\left(\frac{|\gamma_1 - \gamma_2|}{\frac{\gamma_1 + \gamma_2}{2}}\right)^{p/2}
			|\bpsi| N
			\leq  \frac{2 \sqrt{5c}}{\nu_0\kappa_0^2 \sqrt{|\Omega|} } 
				\sqrt{\ln(N+1)} 
				M_H( \bphi) M_V( \bphi).
		\]
		This holds for all time, so we may take the supremum over all time and get the stated result.
	\end{proof} 

	In the previous lemma no conclusions can be drawn regarding the viscosity in the event that the data are zero in $C_b(H)$ (compare to the nonzero requirement of Fact~\ref{sec:Motivation}). However, given a nonzero function $\bphi \in C_b(H)$, we always have $\|P_n(\bphi)\|_{C_b(H)} > 0$ for some $n > 1$. Therefore, given $0 \neq \bphi \in C_b(H)$, let
	\begin{equation}  \label{eqn:smallestN}
		n_0(\bphi) = \sup \{n \in \mathbb{R} \:|\: \|P_n(\bphi)\|_{C_b(H)} = 0 \}.
	\end{equation}
	Then \( 1 \leq n_0(\bphi) < \infty \),  and \( \|P_{N}(\bphi)\|_{C_b(H)} > 0 \) for any \( N > n_0(\bphi). \) 

	We now give conditions for Problem~\ref{pbl:main-optimization} to have a unique solution. 
	\begin{theorem}\label{thm:optimization_well-posedness} 
		Let \( \bu \in C_b(V) \) be a solution of \eqref{sys:Navier--Stokes} with viscosity 
		\( \nu \in [\nu_0, \nu_1] \),
		where \( 0 < \nu_0 < \nu_1 < \infty \) are given, and suppose that $\bu \neq 0$. 
		Choose \( \mu_0 \geq \frac{\nu_1}{\nu_0}. \) 
		If
		\begin{equation*}
			N > \max\left\{\frac{8}{\nu_0\kappa_0} M_V(\bu),\: n_0(\bu)\right\}
		\end{equation*}
		and
		\begin{equation}\label{cnd:N-uniqueness}
			\frac{N}{\sqrt{\ln(N+1)}}
			> \frac{2 \sqrt{5c}}{\nu_0\kappa_0^2 \sqrt{|\Omega|} } 
				\frac{M_H( \bu) M_V( \bu)} {\|\Proj{\bu}\|_{C_b(H)}},
		\end{equation}
		then $\nu$ is the \emph{unique} solution of Problem~\ref{pbl:main-optimization} on \([\nu_0, \nu_1]\).
	\end{theorem}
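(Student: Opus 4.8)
The plan is to prove that $\nu$ is a global minimizer with cost zero, and then to rule out any competing zero-cost value by a contradiction argument driven entirely by Lemma~\ref{lem:viscosity-error}. First I would note that, with $\gamma=\nu$ and data $\bphi=\Proj{\bu}$, the reference solution $\bu$ itself solves \eqref{sys:data-assimilation}; since $\bu\in C_b(V)$ lies on the attractor and thus furnishes an existing solution, Corollary~\ref{cor:Lipschitz_close} (whose hypotheses hold here because $\mu_0\ge\nu_1/\nu_0\ge1$, $N>\tfrac{8}{\nu_0\kappa_0}M_V(\bu)\ge\tfrac{8}{\nu_0\kappa_0}M_V(\Proj{\bu})$, and the relative-difference condition is vacuous for equal viscosities) forces uniqueness, giving $\Wmap{\nu}{\Proj{\bu}}=\bu$. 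Hence $\Proj{\Wmap{\nu}{\Proj{\bu}}}=\Proj{\bu}$, so $\Cost(\nu)=0$ and $\nu$ minimizes $\Cost\ge0$ on $[\nu_0,\nu_1]$. It remains to show $\nu$ is the only such minimizer.

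So I would suppose $\gamma^*\in[\nu_0,\nu_1]$ also satisfies $\Cost(\gamma^*)=0$, i.e. $\Proj{\Wmap{\gamma^*}{\Proj{\bu}}}=\Proj{\bu}$ in $C_b(H)$. Combining this with $\Proj{\Wmap{\nu}{\Proj{\bu}}}=\Proj{\bu}$ places me exactly in the hypothesis of Lemma~\ref{lem:viscosity-error}, taking $\gamma_1=\gamma^*$, $\gamma_2=\nu$, and $\bphi=\Proj{\bu}$; the two projected trajectories agree, so their common value is $\bpsi=\Proj{\bu}$. The standing hypotheses of that lemma transfer directly: $\mu_0\ge\nu_1/\nu_0$ is assumed, and $N\ge\tfrac{8}{\nu_0\kappa_0}M_V(\Proj{\bu})$ follows from $N>\tfrac{8}{\nu_0\kappa_0}M_V(\bu)$ using the monotonicity $\|\Proj{\bu}\|_{C_b(V)}\le\|\bu\|_{C_b(V)}$ in the definition \eqref{eqn:M_V} of $M_V$.

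Lemma~\ref{lem:viscosity-error} then yields either $|\gamma^*-\nu|=0$, which is the desired conclusion, or the displayed inequality for every $p\in[0,1]$. I would take $p=0$, which legitimately replaces the viscosity-ratio prefactor by $1$ in the branch $\gamma^*\ne\nu$ that we are contradicting, leaving
\[
	\|\Proj{\bu}\|_{C_b(H)}\,N \;\le\; \frac{2\sqrt{5c}}{\nu_0\kappa_0^2\sqrt{|\Omega|}}\,\sqrt{\ln(N+1)}\,M_H(\Proj{\bu})\,M_V(\Proj{\bu}).
\]
Because $N>n_0(\bu)$, the definition \eqref{eqn:smallestN} guarantees $\|\Proj{\bu}\|_{C_b(H)}>0$, so I may divide by it; bounding $M_H(\Proj{\bu})\le M_H(\bu)$ and $M_V(\Proj{\bu})\le M_V(\bu)$ and dividing by $\sqrt{\ln(N+1)}$ gives
\[
	\frac{N}{\sqrt{\ln(N+1)}} \;\le\; \frac{2\sqrt{5c}}{\nu_0\kappa_0^2\sqrt{|\Omega|}}\,\frac{M_H(\bu)\,M_V(\bu)}{\|\Proj{\bu}\|_{C_b(H)}},
\]
which contradicts \eqref{cnd:N-uniqueness}. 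Hence $\gamma^*=\nu$, proving uniqueness on $[\nu_0,\nu_1]$.

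All the genuine analysis is packaged in Lemma~\ref{lem:viscosity-error} (and, through it, Corollary~\ref{cor:Lipschitz_wUpperBound}); at this level the work is bookkeeping, and the only steps needing care are verifying that the $\bu$-based thresholds dominate the $\Proj{\bu}$-based ones the lemma requires, and justifying the $p=0$ specialization — precisely the place where one divides by $|\gamma^*-\nu|$, which is permissible exactly in the case $\gamma^*\ne\nu$ under contradiction. The conceptual crux, already resolved upstream, is that smallness of the \emph{projected} loss still controls the full viscosity error; this is why the extra logarithmic threshold \eqref{cnd:N-uniqueness}, beyond the well-posedness condition, is what the theorem must assume.
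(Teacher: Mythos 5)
Your proposal is correct and follows essentially the same route as the paper: establish $\bu = \Wmap{\nu}{\Proj{\bu}}$ (hence $\Cost(\nu)=0$), then apply Lemma~\ref{lem:viscosity-error} with $p=0$ to any zero-cost $\gamma^*$ and contradict \eqref{cnd:N-uniqueness} using $N>n_0(\bu)$ and the monotonicity $M_H(\Proj{\bu})\le M_H(\bu)$, $M_V(\Proj{\bu})\le M_V(\bu)$. The only cosmetic difference is that you invoke Corollary~\ref{cor:Lipschitz_close} directly for the identity $\Wmap{\nu}{\Proj{\bu}}=\bu$, whereas the paper routes this through Theorem~\ref{thm:determining-map} --- which is itself proved by that corollary, so the arguments coincide.
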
 
	\begin{proof} 
		By the assumption that \( \bu\in C_b(V), \)
		we may take 
		\( R = \|\Proj{\bu}\|_{C_b(V)} \leq \|\bu\|_{C_b(V)} < \infty, \)
		in Theorem~\ref{thm:determining-map} and conclude that
		\( \bu = \Wmap{\nu}{\Proj{\bu}}. \)

		Let 
		\( \gamma \in [\nu_0, \nu_1] \) 
		and suppose that
		\( \mathcal{L}(\gamma) = 0. \)
		Then by Lemma~\ref{lem:viscosity-error} with $p=0$, either $\nu = \gamma$ or
		\begin{multline*}
			\|\Proj{\bu}\|_{C_b(H)}\, N
			\leq  \frac{2 \sqrt{5c}}{\nu_0\kappa_0^2 \sqrt{|\Omega|} } 
				\sqrt{\ln(N+1)} 
				M_H( \Proj{\bu}) M_V( \Proj{\bu})
			\\
			\leq  \frac{2 \sqrt{5c}}{\nu_0\kappa_0^2 \sqrt{|\Omega|} } 
				\sqrt{\ln(N+1)} 
				M_H( \bu) M_V( \bu).
		\end{multline*}
		This inequality is invalidated by \eqref{cnd:N-uniqueness}, so we conclude that $\nu = \gamma$.
	\end{proof} 

	In Theorem \ref{thm:optimization_well-posedness}, condition \eqref{cnd:N-uniqueness} depends on the solution $\bu$ on which we only have knowledge of its finite-dimensional projection $P_Nu$. 
	The previous Theorem can be reformulated in terms of a dynamical property namely the distance between zero and the attractor, $\mathcal A$.

	\begin{theorem} \label{thm:attrzero} 
		Assume $\nu \in [\nu_0,\nu_1]$ with $\nu_0 >0$ and assume $\mbf 0 \notin \mA_\nu$.
		Let
		\[
		 G= \max\left\{ \frac{\|\f\|_{L^2}}{\nu_0^2\kappa_0^2},  \frac{\|\f\|_{\sH^1}}{\nu_0^2\kappa_0^3}\right\}\ \mbox{and}\  \delta=d_H(\mbf 0, \mathcal A_\nu)=\inf_{\bu \in \mA_\nu}|\bu|.
		 \]
		Then $n_0(\bu) \lesssim \frac{\nu_1G}{ \delta}$  where $G:= \frac{\|\f\|_{L^2}}{\nu_0^2\kappa_0^2}$.  Moreover, $\nu$ is the unique solution of Problem~\ref{pbl:main-optimization}
		provided
		\be \label{condn:attrzero-1}
		N \gtrsim \max\{\frac{\nu_1 G}{\delta}, \frac{\nu_1}{\nu_0} G\sqrt{1+  \frac{\nu_1}{\nu_0}}\}
		\ \mbox{and}\ \frac{N}{\sqrt{\ln(N+1)}} \gtrsim 
		\frac{\nu_1^2(1+ \frac{\nu_1}{\nu_0})G}{\nu_0\delta}.
		\ee
		\comments{	
			\be  \label{condn:attrzero-1}
			N \gtrsim \max\left\{\frac{\nu_1G}{ \delta}, \frac{M_V(\bu)}{\nu_0\kappa_0}\right\}
			\ \mbox{and}\ 
			\frac{N}{\sqrt{\ln(N+1) }} \gtrsim	\frac{M_H(\bu)M_V(\bu)}{\nu_0\kappa_0 \delta}.
			\ee
		}
	\end{theorem}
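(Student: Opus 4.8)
The plan is to obtain Theorem~\ref{thm:attrzero} as a corollary of Theorem~\ref{thm:optimization_well-posedness}, by replacing each quantity in the hypotheses of the latter that depends on the unknown reference trajectory $\bu$---namely $n_0(\bu)$, $M_H(\bu)$, $M_V(\bu)$, and the denominator $\|\Proj{\bu}\|_{C_b(H)}$ appearing in \eqref{cnd:N-uniqueness}---by an explicit bound in terms of the forcing $\f$, the window $[\nu_0,\nu_1]$, and the geometric quantity $\delta = d_H(\mbf 0, \mA_\nu)$. Since $\bu(t) \in \mA_\nu$ for every $t$, the attractor bounds \eqref{boundmAH1} give $\|\bu\|_{C_b(H)} \le \nu G_\nu$ and $\|\bu\|_{C_b(V)} \le \nu\kappa_0 G_\nu$; using $\nu \le \nu_1$ and $G_\nu \le G$ these become $\|\bu\|_{C_b(H)} \lesssim \nu_1 G$ and $\|\bu\|_{C_b(V)} \lesssim \nu_1 \kappa_0 G$. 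Substituting these into the definitions \eqref{eqn:M_H}--\eqref{eqn:M_V}, with the admissible choice $\mu_0 = \nu_1/\nu_0$, yields bounds $M_H(\bu) \lesssim \nu_1 G$ and $M_V(\bu) \lesssim \nu_1\kappa_0 G$, up to a factor depending only on $\nu_1/\nu_0$ (this is where the $\sqrt{1 + \nu_1/\nu_0}$ in \eqref{condn:attrzero-1} originates).

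The two estimates that are genuinely new both rest on separating low and high modes through the high-frequency Poincar\'e inequality in \eqref{eqn:Type-1}. For the bound on $n_0(\bu)$, I would observe that if $P_n\bu \equiv 0$ then $\bu = (I - P_n)\bu$, so \eqref{eqn:Type-1} gives $|\bu(t)| \le (\kappa_0 n)^{-1}\|\bu(t)\|$ for every $t$; combining this with the a priori lower bound $|\bu(t)| \ge \delta$ (valid precisely because $\mbf 0 \notin \mA_\nu$ and $\bu(t)\in\mA_\nu$) and the $C_b(V)$ bound above forces $n \le \|\bu\|_{C_b(V)}/(\kappa_0\delta) \lesssim \nu_1 G/\delta$. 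Taking the supremum over all such $n$ in the definition \eqref{eqn:smallestN} yields the asserted $n_0(\bu) \lesssim \nu_1 G/\delta$.

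The crucial step is the lower bound on the projected norm $\|\Proj{\bu}\|_{C_b(H)}$, since this is the denominator in condition \eqref{cnd:N-uniqueness} and a priori we only know it is positive. I would write, for each $t$, $|\Proj{\bu}(t)|^2 = |\bu(t)|^2 - |(I-P_N)\bu(t)|^2 \ge \delta^2 - (\kappa_0 N)^{-2}\|\bu\|_{C_b(V)}^2$, and then invoke the first half of \eqref{condn:attrzero-1}, $N \gtrsim \nu_1 G/\delta$, to absorb the high-mode remainder into at most $\tfrac12\delta^2$; this gives $\|\Proj{\bu}\|_{C_b(H)} \ge \delta/\sqrt{2}$ uniformly in $t$. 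With this bound the denominator in \eqref{cnd:N-uniqueness} may be replaced by $\delta$, and substituting the estimates for $M_H(\bu)$ and $M_V(\bu)$ together with $\sqrt{|\Omega|} = 2\pi/\kappa_0$ converts \eqref{cnd:N-uniqueness} into the second inequality of \eqref{condn:attrzero-1}. Simultaneously, the hypothesis $N > \max\{8 M_V(\bu)/(\nu_0\kappa_0),\, n_0(\bu)\}$ of Theorem~\ref{thm:optimization_well-posedness} is guaranteed by the first inequality of \eqref{condn:attrzero-1}. All hypotheses of Theorem~\ref{thm:optimization_well-posedness} being met, the uniqueness of $\nu$ as the solution of Problem~\ref{pbl:main-optimization} on $[\nu_0,\nu_1]$ follows at once.

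I expect the main obstacle to be this lower bound $\|\Proj{\bu}\|_{C_b(H)} \gtrsim \delta$: it is the only point at which the purely geometric hypothesis $\mbf 0 \notin \mA_\nu$ is turned into a quantitative statement about the \emph{observed} modes, and it is exactly what forces the threshold $N \gtrsim \nu_1 G/\delta$. Everything else amounts to propagating the attractor bounds \eqref{boundmAH1} through the algebra of Theorem~\ref{thm:optimization_well-posedness} and bookkeeping the dependence on $\nu_1/\nu_0$ introduced by the choice $\mu_0 = \nu_1/\nu_0$.
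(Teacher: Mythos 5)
Your proposal is correct and follows essentially the same route as the paper's own proof: both use the attractor bounds \eqref{boundmAH1} together with the spectral inequality \eqref{eqn:Type-1} to obtain $n_0(\bu)\lesssim \nu_1 G/\delta$ and the uniform lower bound $|\Proj{\bu}|^2\geq \tfrac12\delta^2$ once $N\gtrsim \nu_1 G/\delta$, then choose $\mu_0=\nu_1/\nu_0$, bound $M_H(\bu)$ and $M_V(\bu)$ by the forcing via \eqref{boundmAH1}, and feed everything into Theorem~\ref{thm:optimization_well-posedness}. Your presentation is in fact somewhat more careful than the paper's at the two data-dependent steps (the contrapositive argument for $n_0(\bu)$ and the explicit lower bound $\|\Proj{\bu}\|_{C_b(H)}\geq \delta/\sqrt{2}$), but the underlying argument is identical.
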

	\begin{proof}
		Note that in view of \eqref{eqn:Type-1}. we have
		\be  \label{keyineq}
		|P_n\bu -\bu|^2 \le \frac{1}{\kappa_0^2N^2}\|\bu\|^2\lesssim \frac{\nu_1^2G^2}{N^2},
		\ee
		where, with $G_\nu$ as in \eqref{defG}, we used the classical bounds \eqref{boundmAH1} yielding
		\be     \label{classicalbounds}
		\sup_{\bu \in \mathcal A_\nu} |\bu| \le  \nu G_\nu \le \nu_1G\ \mbox{and}\ 
		\sup_{\bu \in \mathcal A_\nu} \|\bu\| \le \nu \kappa_0 G_\nu \le  \nu_1 \kappa_0 G.
		\ee
		Then, provided $N \gtrsim \frac{\nu_1G}{ \delta}$, from \eqref{keyineq}, we have $|P_N\bu-\bu|^2 \le \frac12 \delta^2$. Using the equality $|\bu|^2=|P_N\bu|^2 + |(I-P_N)\bu|^2$ and the inequality $|\bu|^2 \ge \delta^2$ for $\bu \in \mathcal A_\nu$, it follows that $|P_N\bu|^2 \ge \frac12 \delta^2$. It follows from the definition \eqref{eqn:smallestN} that $n_0(\bu) \le  \frac{\nu_1G}{ \delta} $.  

			Let  $\mu_0 =\frac{\nu_1}{\nu_0}$. Since $\nu \in [\nu_0,\nu_1]$.
		 it follows readily from \eqref{classicalbounds} and the definition of $M_H(\bu)$ and $M_V(\bu)$ in \eqref{eqn:M_H} and \eqref{eqn:M_V} that
		\[
		M_H(\bu) \lesssim \nu_1 \sqrt{1+\frac{\nu_0}{\nu_1}}\, \widetilde{G}
		\ \mbox{and}\  M_H(\bu) \lesssim \nu_1 \sqrt{1+\frac{\nu_0}{\nu_1}}\kappa_0\, \widetilde{G}.
		\]
		The conclusion now  follows immediately from Theorem \ref{thm:optimization_well-posedness} by inserting the bound
		 $|P_N\bu|^2 \ge \frac12 \delta^2$ in \eqref{cnd:N-uniqueness}.
	\end{proof}

	\comments{
	\begin{remark}
		Let  $\mu_0 =\frac{\nu_1}{\nu_0}$ and and define the non-dimensional Grashoff number
		\[
		  \widetilde{G} = \max\left\{ \frac{\|\f\|_{L^2}}{\nu_0^2\kappa_0^2},  \frac{\|\f\|_{\sH^1}}{\nu_0^2\kappa_0^3}\right\}.
		\]
		Assume that $\nu \in [\nu_0,\nu_1]$.
		Then, it follows readily from \eqref{classicalbounds} and the definition of $M_H(\bu)$ and $M_V(\bu)$ in \eqref{eqn:M_H} and \eqref{eqn:M_V}, it follows that
		\[
		 M_H(\bu) \lesssim \nu_1 \sqrt{1+\frac{\nu_0}{\nu_1}}\, \widetilde{G}
		 \ \mbox{and}\  M_H(\bu) \lesssim \nu_1 \sqrt{1+\frac{\nu_0}{\nu_1}}\kappa_0\, \widetilde{G}.
		\]
		One can then express \eqref{condn:attrzero-1} as
		\[
		N \gtrsim \max\{\frac{\nu_1 \widetilde{G}}{\delta}, \frac{\nu_1}{\nu_0} \widetilde{G}\sqrt{1+  \frac{\nu_1}{\nu_0}}\}
		\ \mbox{and}\ \frac{N}{\sqrt{\ln(N+1)}} \gtrsim 
		\frac{\nu_1^2(1+ \frac{\nu_1}{\nu_0})\widetilde{G}}{\nu_0\delta}.
		\] 
	\end{remark}
	}

	We can easily see that the condition $\bu\neq 0$ is necessary for the unique recovery of $\nu$ by inspecting \eqref{sys:Navier--Stokes}: if $\bu \equiv 0$ satisfies \eqref{sys:Navier--Stokes} for a given force $\f$, then it does so with any value for $\nu$ because $Au \equiv 0$, hence there is no unique $\nu$ to recover. As evidenced by the following example, the condition $N > n_0(\bu)$ in Theorem~\ref{thm:optimization_well-posedness} is also necessary.

	\begin{example} 
	Fix \( \bk\in\mathbb{Z}^2/\{0\} \) and $c\in\mathbb{C}/\{0\}$ and define \( \f:\Omega\to V \) by 
	\[
		\f(\bx) = c \bk^\perp e^{i \kappa_0 \bk\cdot \bx} + \bar{c} \bk^\perp e^{-i \kappa_0 \bk\cdot \bx} 
		, \quad \forall \bx\in\Omega,
	\]
	(where $\bk^\perp$ is a unit vector orthogonal to $\bk$, for example, $[ -k_2 , k_1 ]^T / |\bk|$), i.e. $\f$ is a time-autonomous ``single mode'' force in $V$.
	Then
	\[ \bu \equiv \frac{1}{\nu\kappa_0^2|\bk|^2} \f \]
	is a stationary solution of \eqref{sys:Navier--Stokes} for any $\nu > 0$.
	Note that 
	\[
		\|\f\| 
		= 2\sqrt{2}\pi |\bk| |c|,
	\]
	so, by choosing $c = \frac{1}{2\sqrt{2}\pi |\bk|}$, we have $\| \f \| = 1$ for any choice of $\bk$, and
	\[
		\|\bu\| = \frac{1}{\nu\kappa_0^2|\bk|^2},
	\]
	so
	\[ 
		M_V(\Proj{\bu}) = \sqrt{2} \sqrt{ \frac{1}{(\nu \kappa_0^2 N^2)^2} + \mu_0^2\|\Proj{\bu}\|^2 } 
		\leq \frac{\sqrt{2}}{\nu \kappa_0^2}\sqrt{\frac{1}{N^4} + \mu_0^2 \frac{1}{|\bk|^4}}.
	\]

	Now, choosing $\mu_0 > 1$ and $N \geq \frac{8\sqrt{2}}{\nu^2 \kappa_0^3}\sqrt{1 + \mu_0^2}$, 
	for any $\gamma \in [\frac{\nu}{\mu_0}, \mu_0 \nu]$,
	\[ \Wmap{\gamma}{\Proj{\bu}} = \frac{1}{\gamma\kappa_0^2 |\bk|^2 + \mu}(\f + \mu \Proj{\bu}). \]
	Therefore,
	\[ \| \Wmap{\gamma}{\Proj{\bu}} - \bu \|_{C_b(H)} = \frac{|\f|}{\gamma\kappa_0^2 |\bk|^2 + \mu} \frac{1}{\nu}
		\begin{cases}
		|\gamma - \nu|, & \text{ if } |\bk| < N \\
		\left| \gamma - \nu + \frac{\mu}{\kappa_0^2 |\bk|^2} \right|, & \text{ if } |\bk| \geq N	
		\end{cases}
	\]
	and
	\[ 
		\| \Proj{\Wmap{\gamma}{\Proj{\bu}}} - \Proj{\bu} \|_{C_b(H)} = 
		\begin{cases}
			\frac{|\f|}{\gamma\kappa_0^2 |\bk|^2 + \mu} \frac{|\gamma - \nu|}{\nu}, & \text{ if } |\bk| < N  \\
			0, & \text{ if } |\bk| \geq N 
		\end{cases}.
	\] 
	So, if $|\bk| \geq N$, then \( \Proj{\bu} = 0, \) and \( \| \Proj{\Wmap{\gamma}{\Proj{\bu}}} - \Proj{\bu} \|_{C_b(H)} = 0 \) 
	while \( |\nu - \gamma| \) can be arbitrarily large. Hence, we have found cases with $N$ arbitrarily large and $\bu\neq0$ where there are infinitely many solutions of Problem~\ref{pbl:main-optimization} because $\Proj{\bu} = 0$.
	\end{example} 


\section{Lipschitz Continuity of the Inverse Problem}\label{sec:nonzero-loss}
	In the previous section we derived conditions for Problem~\ref{pbl:main-optimization} to have a unique solution. In so doing, we found conditions on $N$ for Problem~\ref{pbl:main-optimization} to be a well-defined mapping of data, $\Proj{\bu}$, to viscosity. We now provide conditions for this mapping, which can be thought of as the inverse of the determining map, $\Wmap{\cdot}{\Proj{\bu}}^{-1}$, to be Lipschitz continuous; i.e., we derive bounds on the viscosity discrepancy in terms of $\Cost$. In addition, we propose an algorithm for solving Problem~\ref{pbl:main-optimization}, which is revealed by the proof of the Lipschitz continuity.

	When $\Cost \geq 0$, the rate of change of the difference between two trajectories corresponding to different viscosities is dynamic, and needs to be integrated. As a consequence, the following results explicitly involve the time-derivative of the data, $\Proj \bu$.

	Before focusing on the case where $\phi = \Proj \bu$ for some solution $\bu$ of \eqref{sys:Navier--Stokes}, we first obtain a bound valid for the determining map in the general setting.
	\begin{lemma}\label{lem:viscosity-err-bound}
		Let \( \bphi \in C_b(V) \) and \( \gamma_1, \gamma_2 \in [\nu_0, \nu_1] \),
		where \( 0 < \nu_0 < \nu_1 < \infty \) are given.
		Let \( \mu_0 \geq \frac{\nu_1}{\nu_0}, \) and
		\begin{equation*}
			N > \frac{8}{\nu_0\kappa_0} M_V( \bphi).
		\end{equation*}
		Let $\bv_1 = \Wmap{\gamma_1}{\bphi}$ and $\bv_2 = \Wmap{\gamma_2}{\bphi}$.
		Then for any interval of time $[s, t] \subset \mathbb{R}$,
		\begin{equation*}
			|\gamma_1 - \gamma_2| \left( \inf_{[s, t]} |\Proj{\bv_1}|^2 - \frac{1}{N} M_1 \right)
			\leq M_2 \|\Proj (\bv_1 - \bv_2)\|_{C_b(H)},
		\end{equation*}
		where \[ 
			M_1 = \frac{ 2 \sqrt{5} c_0^2 }{ \nu_0 \kappa_0^2 }\; M_H(\bphi)\,(M_V(\bphi))^2
		\]
		and \[ 
			M_2 = \mu\frac{1 + \delta^{N^2}}{1-\delta^{N^2}}\|A^{-1} \Proj{\bv_1}\|_{C_b(H)} + \|A^{-1}\partial_t \Proj{\bv_1}\|_{C_b(H)} +\gamma_2 \|\Proj{\bv_1}\|_{C_b(H)},
		\]
		and $\delta = e^{-\mu_0\nu_0\kappa_0^2(t-s)}$.
	\end{lemma}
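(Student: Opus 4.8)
The plan is to extract the factor $\gamma_1 - \gamma_2$ from the \emph{low-mode} component of the equation governing $\bw := \bv_1 - \bv_2$, to isolate it by pairing against $A^{-1}\Proj{\bv_1}$, and then to average in time against a carefully chosen exponential weight that produces the $\coth$-type factor appearing in $M_2$. First I would subtract the two copies of \eqref{sys:data-assimilation}, apply $\Proj{}$ (which commutes with $\A$), and split $\gamma_1\A\Proj{\bv_1} - \gamma_2\A\Proj{\bv_2} = (\gamma_1-\gamma_2)\A\Proj{\bv_1} + \gamma_2\A\Proj{\bw}$ to obtain
\[
(\gamma_1-\gamma_2)\A\Proj{\bv_1} = -\partial_t\Proj{\bw} - \gamma_2\A\Proj{\bw} - \mu\Proj{\bw} - \Proj(\B{\bar{\bv}}{\bw}+\B{\bw}{\bar{\bv}}),
\]
where $\bar{\bv}=\tfrac12(\bv_1+\bv_2)$ and I have used bilinearity, $\B{\bv_1}{\bv_1}-\B{\bv_2}{\bv_2}=\B{\bar{\bv}}{\bw}+\B{\bw}{\bar{\bv}}$. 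Taking the $H$ inner product with $\bz:=A^{-1}\Proj{\bv_1}$ and using $(\A\Proj{\bv_1},A^{-1}\Proj{\bv_1})=|\Proj{\bv_1}|^2$ and $(\A\Proj{\bw},A^{-1}\Proj{\bv_1})=(\Proj{\bw},\Proj{\bv_1})$ yields a pointwise-in-time identity with $(\gamma_1-\gamma_2)|\Proj{\bv_1}|^2$ on the left.

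Next I would handle the time derivative. The device is to integrate this identity over $[s,t]$ against the normalized weight $\rho(\tau)=\frac{\mu e^{\mu\tau}}{e^{\mu t}-e^{\mu s}}$, which satisfies $\rho\ge 0$, $\int_s^t\rho\,d\tau=1$, and crucially $\rho'=\mu\rho$. Writing $z(\tau)=(\Proj{\bw},A^{-1}\Proj{\bv_1})$ and integrating $-(\partial_\tau\Proj{\bw},A^{-1}\Proj{\bv_1})-\mu z$ by parts in time, the identity $\rho'-\mu\rho=0$ annihilates the interior integral and leaves only the boundary term $-[\rho z]_s^t$ together with the transferred term $\int_s^t\rho\,(\Proj{\bw},A^{-1}\partial_\tau\Proj{\bv_1})\,d\tau$. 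Since $\rho(t)+\rho(s)=\mu\frac{1+\delta^{N^2}}{1-\delta^{N^2}}$ (recalling $\delta^{N^2}=e^{-\mu(t-s)}$ because $\mu=\mu_0\nu_0\kappa_0^2N^2$), the boundary term is bounded by the first term of $M_2$ times $\|\Proj{\bw}\|_{C_b(H)}$; the transferred term and the $\gamma_2(\Proj{\bw},\Proj{\bv_1})$ term, bounded by Cauchy--Schwarz together with $\int_s^t\rho=1$, give the remaining two terms of $M_2$. This exponential weight is precisely what converts the crude $(\mu+2/(t-s))$ bound from a naive integration by parts into the sharp $\coth$ factor.

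For the nonlinear contribution $(\Proj(\B{\bar{\bv}}{\bw}+\B{\bw}{\bar{\bv}}),\bz)$ I would use the antisymmetry of $b$ to move the derivative off $\bw$ and onto the smooth factor $\bz$, writing the term as $-\b{\bar{\bv}}{\bz}{\bw}-\b{\bw}{\bz}{\bar{\bv}}$. Each trilinear term is then estimated by H\"older with exponents $(4,4,2)$, keeping $\bw$ in $L^2$, then by Ladyzhenskaya \eqref{ineqLadyzhenskaya} and the smoothing bounds $\|\bz\|\le\kappa_0^{-1}|\Proj{\bv_1}|$ and $|A\bz|=|\Proj{\bv_1}|$, which give $\|\nabla\bz\|_{L^4}\le c_0\kappa_0^{-1/2}|\Proj{\bv_1}|$ and $\|\bar{\bv}\|_{L^4}\le c_0\kappa_0^{-1/2}\|\bar{\bv}\|$. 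Combining with the a priori bounds $|\Proj{\bv_1}|\le M_H(\bphi)$, $\|\bar{\bv}\|\le M_V(\bphi)$ from Lemma~\ref{lem:apriori-bounds}, and with the $C_b(H)$ Lipschitz bound $\|\bw\|_{C_b(H)}\le\sqrt5\,|\gamma_1-\gamma_2|M_V(\bphi)/(\nu_0\kappa_0 N)$ furnished by Corollary~\ref{cor:Lipschitz_wUpperBound} (whose hypotheses are exactly the assumptions of this lemma), this yields the bound $|\gamma_1-\gamma_2|M_1/N$. Finally, since $\rho\ge 0$ and $\int_s^t\rho=1$, the left-hand side dominates $|\gamma_1-\gamma_2|\inf_{[s,t]}|\Proj{\bv_1}|^2$, and collecting the estimates and rearranging gives the claim.

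I expect the nonlinear estimate to be the main obstacle. The subtlety is that $\bw$ must be kept to the \emph{first} power in $|\bw|$, rather than the mixed norm $|\bw|^{1/2}\|\bw\|^{1/2}$ produced by a direct application of \eqref{eqn:nonlinear-est-Lady}; only the first power preserves the single power of $|\gamma_1-\gamma_2|$ coming from the Corollary and hence produces the factor $|\gamma_1-\gamma_2|M_1/N$ with the stated $M_1$. This is what forces the specific H\"older split that places $\bw$ in $L^2$ and loads both derivatives onto the smooth factor $A^{-1}\Proj{\bv_1}$, which is legitimate precisely because $\Proj{\bv_1}$ lives in a finite-dimensional modal space.
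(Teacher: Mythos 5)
Your proposal is correct and follows essentially the same route as the paper's proof: you pair the difference equation with $A^{-1}\Proj{\bv_1}$ to isolate $(\gamma_1-\gamma_2)|\Proj{\bv_1}|^2$, your normalized weight $\rho$ with $\rho'=\mu\rho$ is exactly the paper's integrating factor $e^{\mu(\tau-s)}$ (both absorb $\partial_t\bw+\mu\Proj{\bw}$ into boundary terms producing the coefficient $\mu\frac{1+\delta^{N^2}}{1-\delta^{N^2}}$), and your nonlinear estimate — antisymmetry of $b$ to shift the derivative onto $A^{-1}\Proj{\bv_1}$ so that $\bw$ stays at first power in $L^2$, then Ladyzhenskaya plus Corollary~\ref{cor:Lipschitz_wUpperBound} with $p=0$ — is precisely the paper's mechanism and lands on the stated $M_1$. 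The only deviations are cosmetic: the symmetric splitting $\B{\bar{\bv}}{\bw}+\B{\bw}{\bar{\bv}}$ in place of the paper's $\B{\bv_1}{\bw}+\B{\bw}{\bv_2}$, and slightly sharper smoothing bounds on $A^{-1}\Proj{\bv_1}$ that still fit within $M_1$ since $\kappa_0 M_H(\bphi)\leq M_V(\bphi)$.
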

	\begin{proof}
		Define 
		\( \bw = \bv_1 - \bv_2 \) and
		\( \bpsi = \Proj{\bv_1}. \)
		Writing the evolution equation for \( \bw \) using \eqref{sys:data-assimilation},
		\[ 
			\partial_t \bw + \gamma_2 \A \bw + (\gamma_1 - \gamma_2) \A \bv_1
			+ \B{\bv_1}{\bw} + \B{\bw}{\bv_2} + \mu\Proj{\bw} = 0,
		\]
		and taking an inner-product with respect to $A^{-1}\bpsi$, we have
		\begin{multline*}
			\left<\partial_t \bw, A^{-1}\bpsi\right> + \gamma_2 \left<\bw, \bpsi\right> 
			+ (\gamma_1 - \gamma_2) |\bpsi|^2 \\
			+ \b{\bv_1}{\bw}{A^{-1}\bpsi} + \b{\bw}{\bv_2}{A^{-1}\bpsi} + \mu\left<\Proj{\bw},A^{-1}\bpsi\right> 
			= 0.
		\end{multline*}
		Using the product rule, we have 
		$\frac{d}{dt}\left<\bw, A^{-1}\bpsi\right> = \left<\partial_t \bw, A^{-1}\bpsi\right> 
			+ \left<\bw, \partial_t A^{-1}\bpsi\right>
		$
		and using the facts that $\Proj\bpsi = \bpsi$, $\Proj$ is self-adjoint, and $\Proj$ commutes with $A^{-1}$, we can rewrite the last differential inequality as
		\begin{multline*}
			\frac{d}{dt}\left<A^{-1}\Proj \bw, \bpsi\right> - \left<A^{-1}\Proj \bw, \partial_t \bpsi\right>
			+ \gamma_2 \left<\Proj \bw, \bpsi\right> + (\gamma_1 - \gamma_2) |\bpsi|^2 \\
			+ \b{\bv_1}{\bw}{A^{-1}\bpsi} + \b{\bw}{\bv_2}{A^{-1}\bpsi} + \mu\left<A^{-1}\Proj{\bw},\bpsi\right> 
			= 0.
		\end{multline*}

		Let $\bbeta := \left<A^{-1}\Proj \bw, \bpsi\right>$. Then
		\begin{multline*}
			\dot{\bbeta} + \mu\bbeta - \left<A^{-1}\Proj \bw, \partial_t \bpsi\right>
			+ \gamma_2 \left<\Proj \bw, \bpsi\right> + (\gamma_1 - \gamma_2) |\bpsi|^2 \\
			+ \b{\bv_1}{\bw}{A^{-1}\bpsi} + \b{\bw}{\bv_2}{A^{-1}\bpsi} 
			= 0,
		\end{multline*}
		so, multiplying by the integrating factor $e^{\mu (\tau - s)}$ and taking the time-integral over the interval $[s,t]$, we have
		\begin{multline}\label{eqn:nonzero-loss-equation}
			\bbeta(t) - e^{-\mu(t - s)}\bbeta(s) + (\gamma_1 - \gamma_2) \int_s^t e^{-\mu(t-\tau)}|\bpsi|^2 \:d\tau 
			\\
			= \int_s^t e^{-\mu(t-\tau)} \left(\left<A^{-1}\Proj \bw, \partial_t \bpsi\right> 
			- \gamma_2 \left<\Proj \bw, \bpsi\right> 
			\right) \;d\tau 
			\\
			-\int_s^t e^{-\mu(t-\tau)}\left( \b{\bv_1}{\bw}{A^{-1}\bpsi} + \b{\bw}{\bv_2}{A^{-1}\bpsi} 
			\right) \;d\tau
		\end{multline}

		Therefore,
		\begin{multline*}
		|\gamma_1 - \gamma_2| \frac{1}{\mu}\left( 1-e^{-\mu(t-s)}\right) \inf_{[s, t]} |\bpsi|^2 
			\leq |\bbeta(t)| + e^{-\mu(t - s)}|\bbeta(s)| \\
			+ \frac{1}{\mu}\left( 1-e^{-\mu(t-s)}\right) 
			\sup_{[s,t]}\left( 
				|A^{-1}\partial_t \bpsi|
				+\gamma_2 |\bpsi|
				\right) \|\Proj \bw\|_{C_b(H)}
			\\
			+ \frac{1}{\mu}\left( 1-e^{-\mu(t-s)}\right) 
			\sup_{[s,t]}\left( 
				(\|\bv_1\|_{L^4}+\|\bv_2\|_{L^4}) \|A^{-\frac12}\bpsi\|_{L^4}
			\right) \|\bw\|_{C_b(H)}.
		\end{multline*}
		Note that for all $t$,
		\[ |\bbeta(t)| \leq |\Proj \bw| |A^{-1} \bpsi| \leq \|\Proj \bw\|_{C_b(H)} \|A^{-1} \bpsi\|_{C_b(H)} < \infty, \]
		and letting $\delta^{N^2} = e^{-\mu(t-s)} = e^{-\mu_0\nu_0\kappa_0^2N^2(t-s)}$ (so $t - s = \frac{\ln(\frac{1}{\delta})}{\mu_0\nu_0\kappa_0^2}$), we can replace the previous inequality with
		\begin{multline*}
		|\gamma_1 - \gamma_2| \inf_{[s, t]} |\bpsi|^2
			\\
			\leq \left( 
				\mu\frac{1 + \delta^{N^2}}{1-\delta^{N^2}}\|A^{-1} \bpsi\|_{C_b(H)} + \|A^{-1}\partial_t \bpsi\|_{C_b(H)} +\gamma_2 \|\bpsi\|_{C_b(H)}
			\right) \|\Proj \bw\|_{C_b(H)}
			\\
			+ \sup_{[s,t]}\left( 
				(\|\bv_1\|_{L^4}+\|\bv_2\|_{L^4}) \|A^{-\frac12}\bpsi\|_{L^4}
			\right) \|\bw\|_{C_b(H)}.
		\end{multline*}
		
		Now, by Corollary~\ref{cor:Lipschitz_wUpperBound} with $p=0$,
		\begin{multline*}
			\|\bw\|_{C_b(H)} \leq 
			\sqrt{5} \frac{ |\gamma_1 - \gamma_2| }{ \left((\gamma_1 + \gamma_2) / 2 \right)^{1/2} }
			\frac{\|\frac12(\bv_1 + \bv_2)\|_{C_b(V)}}{\sqrt{\nu_0} \kappa_0 N}
			\\
			\leq
			\frac{\sqrt{5}}{2} \frac{ |\gamma_1 - \gamma_2| }{ \nu_0 \kappa_0 N } (\|\bv_1\|_{C_b(V)} + \|\bv_2\|_{C_b(V)}),
		\end{multline*}
		so
		\[
			|\gamma_1 - \gamma_2| \left( \inf_{[s, t]} |\bpsi|^2 - \frac{1}{N} \widetilde{M}_1 \right)
			\leq M_2 \|\Proj \bw\|_{C_b(H)}
		\]
		where \[ 
			\widetilde{M}_1 = \frac{ \sqrt{5} }{ 2 \nu_0 \kappa_0 }(\|\bv_1\|_{C_b(V)} + \|\bv_2\|_{C_b(V)})
			\sup_{[s,t]}\left( (\|\bv_1\|_{L^4}+\|\bv_2\|_{L^4}) \|A^{-\frac12}\bpsi\|_{L^4} \right)
		\]
		and \[ 
			M_2 = \mu\frac{1 + \delta^{N^2}}{1-\delta^{N^2}}\|A^{-1} \bpsi\|_{C_b(H)} + \|A^{-1}\partial_t \bpsi\|_{C_b(H)} +\gamma_2 \|\bpsi\|_{C_b(H)}.
		\]
		Finally, we use \eqref{eqn:Lady} to bound the $L^4$ norms,
		\begin{multline*}
		\sup_{[s,t]}\left( \|\bv_1\|_{L^4}+\|\bv_2\|_{L^4} \right) 
			\leq c_0 \sup_{[s,t]} \left( |\bv_1|^{\frac12}\|\bv_1\|^\frac12 
				+ |\bv_2|^{\frac12}\|\bv_2\|^\frac12 \right) \\
			\leq c_0 \left( \|\bv_1\|_{C_b(H)}^\frac12 \|\bv_1\|_{C_b(V)}^\frac12 
				+ \|\bv_2\|_{C_b(H)}^\frac12 \|\bv_2\|_{C_b(V)}^\frac12\right)
		\end{multline*}
		and (additionally using \eqref{eqn:Poincare}),
		\begin{multline*}
			\sup_{[s,t]}\left( \|A^{-\frac12}\bpsi\|_{L^4} \right) 
			\leq c_0 \|A^{-\frac12}\bpsi\|_{C_b(H)}^\frac12 \|A^{-\frac12}\bpsi\|_{C_b(V)}^\frac12
			\\ \leq c_0 \frac{1}{\kappa_0^\frac12}\|\bpsi\|_{C_b(H)}^\frac12 
				\frac{1}{\kappa_0^\frac12}\|\bpsi\|_{C_b(V)}^\frac12
			\leq \frac{c_0}{\kappa_0}\|\bv_1\|_{C_b(H)}^\frac12 
				\|\bv_1\|_{C_b(V)}^\frac12
		\end{multline*}
		so that we can replace $\widetilde{M}_1$ with $M_1$ using Lemma~\ref{lem:apriori-bounds}:
		\[ 
			\widetilde{M}_1 \leq 
			\frac{ \sqrt{5} }{ 2 \nu_0 \kappa_0 }\; 2 M_V(\bphi)
			\;2 c_0 \sqrt{M_H(\bphi)}\sqrt{M_V(\bphi)}
			\;\frac{c_0}{\kappa_0}\sqrt{M_H(\bphi)}\sqrt{M_V(\bphi)}
			= M_1.
		\]

	\end{proof}

	In the following Theorem, we apply Lemma~\ref{lem:viscosity-err-bound} to the case where $\bv_1$ is a solution of \eqref{sys:Navier--Stokes} with viscosity $\nu$. 
	\begin{theorem}\label{thm:nonzero-loss}
		Let \( \bu \in C_b(V) \) be a solution of \eqref{sys:Navier--Stokes} with viscosity 
		\( \nu \in [\nu_0, \nu_1] \),
		where \( 0 < \nu_0 < \nu_1 < \infty \) are given, and suppose that for some $n \geq 1$ and a time interval, $[s,t]$,
		\[ \inf_{[s,t]} | \Proj[n]{\bu} |^2 > 0. \]
		Choose \( \mu_0 \geq \frac{\nu_1}{\nu_0}. \) 
		If
		\begin{equation*}\label{cnd:N-nonzero-loss}
			N > \max\left\{
				\frac{8}{\nu_0\kappa_0} M_V( \bu),\: 
				n,\: 
				2\frac{M_1}{\inf_{[s,t]} | \Proj[N]{\bu} |^2}
			\right\},
		\end{equation*}
		then for any $\gamma \in [\nu_0, \nu_1]$,
		\[ |\nu - \gamma| \leq 2 \frac{M_2}{\inf_{[s,t]} | \Proj[N]{\bu} |^2} \mathcal{L}(\gamma), \]
		where $M_1$ and $M_2$ are defined as in Lemma~\ref{lem:viscosity-err-bound} with $\bv_1 = \bu$ and
		$\phi = \Proj{\bu}$.
	\end{theorem}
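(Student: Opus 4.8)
The plan is to apply Lemma~\ref{lem:viscosity-err-bound} directly, taking the data to be $\bphi = \Proj{\bu}$, the first viscosity to be $\gamma_1 = \nu$, and the second to be $\gamma_2 = \gamma$. The first step is to identify $\bv_1 := \Wmap{\nu}{\Proj{\bu}}$ with $\bu$ itself. Since $\bu$ solves \eqref{sys:Navier--Stokes} with viscosity $\nu$, the feedback term $\mu(\Proj{\bu} - \Proj{\bu})$ vanishes, so $\bu$ is also a solution of the data-assimilation system \eqref{sys:data-assimilation} with $\gamma = \nu$ and $\bphi = \Proj{\bu}$. The hypotheses of Theorem~\ref{thm:determining-map} are met (because $N > \frac{8}{\nu_0\kappa_0}M_V(\bu)$ and $\|\Proj{\bu}\|_{C_b(V)} \le \|\bu\|_{C_b(V)}$), so this solution is unique and $\bu = \Wmap{\nu}{\Proj{\bu}}$. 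Setting $\bv_2 := \Wmap{\gamma}{\Proj{\bu}}$, the quantity $\|\Proj(\bv_1 - \bv_2)\|_{C_b(H)}$ appearing in the lemma is then exactly $\Cost(\gamma)$, by the definition of the cost functional in Problem~\ref{pbl:main-optimization}.

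Next I would verify the remaining hypotheses of Lemma~\ref{lem:viscosity-err-bound}: the bound $\mu_0 \geq \nu_1/\nu_0$ is assumed, and since $M_V$ is monotone in $\|\bphi\|_{C_b(V)}$ while $\|\Proj{\bu}\|_{C_b(V)} \leq \|\bu\|_{C_b(V)}$, the assumed bound $N > \frac{8}{\nu_0\kappa_0}M_V(\bu)$ implies $N > \frac{8}{\nu_0\kappa_0}M_V(\Proj{\bu})$, which is what the lemma requires. Applying the lemma with $[s,t]$ the given interval then yields
\[
|\nu - \gamma|\left(\inf_{[s,t]}|\Proj{\bu}|^2 - \tfrac{1}{N}M_1\right) \leq M_2\,\Cost(\gamma),
\]
with $M_1, M_2$ exactly the quantities named in the theorem (specialized to $\bv_1 = \bu$, $\bphi = \Proj{\bu}$).

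The final step is to absorb the subtracted term on the left-hand side. The hypothesis $N > n$ guarantees $\inf_{[s,t]}|\Proj{\bu}|^2 \geq \inf_{[s,t]}|\Proj[n]{\bu}|^2 > 0$, so the infimum is strictly positive; the hypothesis $N > 2M_1/\inf_{[s,t]}|\Proj{\bu}|^2$ gives $\tfrac{1}{N}M_1 < \tfrac12\inf_{[s,t]}|\Proj{\bu}|^2$, whence the parenthesized factor exceeds $\tfrac12\inf_{[s,t]}|\Proj{\bu}|^2 > 0$. Dividing through then produces $|\nu - \gamma| \leq 2\frac{M_2}{\inf_{[s,t]}|\Proj{\bu}|^2}\Cost(\gamma)$, as claimed. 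I expect essentially no analytic obstacle here, since all the hard estimates are already packaged inside Lemma~\ref{lem:viscosity-err-bound}; the only points requiring care are the identification $\bu = \Wmap{\nu}{\Proj{\bu}}$ and the observation that monotonicity of $M_V$ lets the $\Proj{\bu}$-based hypothesis of the lemma follow from the $\bu$-based hypothesis of the theorem, together with checking that the third condition on $N$ is precisely what is needed to render the left-hand coefficient positive and bounded below.
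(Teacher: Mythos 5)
Your proposal is correct and follows exactly the paper's argument: identify $\bu = \Wmap{\nu}{\Proj{\bu}}$ via Theorem~\ref{thm:determining-map} (taking $R = \|\Proj{\bu}\|_{C_b(V)} \leq \|\bu\|_{C_b(V)}$), then apply Lemma~\ref{lem:viscosity-err-bound} with $\gamma_1 = \nu$, $\gamma_2 = \gamma$, $\bphi = \Proj{\bu}$, and absorb the $M_1/N$ term using the hypotheses on $N$. The paper states this last part as ``an immediate consequence'' of the lemma; you have simply written out the details (monotonicity of $M_V$, positivity of $\inf_{[s,t]}|\Proj{\bu}|^2$ from $N > n$, and the factor-of-two absorption), which is precisely what the paper intends.
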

	\begin{proof}
		By the assumption that \( \bu\in C_b(V), \) we may take 
		\( R = \|\Proj{\bu}\|_{C_b(V)} \leq \|\bu\|_{C_b(V)} < \infty, \)
		in Theorem~\ref{thm:determining-map} and conclude that
		\( \bu = \Wmap{\nu}{\Proj{\bu}}. \) 
		The rest is an immediate consequence of Lemma~\ref{lem:viscosity-err-bound}.
	\end{proof}

	\begin{remark} 
		We chose to state Theorem~\ref{thm:nonzero-loss} with the \emph{a priori} assumption that there exists an interval of time where finitely many modes of $\bu$ are bounded away from zero. However, given the continuity of $\bu$ in time, the weaker assumption that $\bu \neq 0$ is sufficient to guarantee the existence of such a time interval.
		
		To see this, if $\bu \neq \mbf 0$. there exists $t_0$ such that $\bu (t_0) \neq \mbf 0$. 
		This implies that there exists an $\epsilon >0$ such that 
		$\inf_{t_0-\epsilon,t_0+\epsilon]} |\bu(t)| \ge \delta $. Proceeding as in the proof of Theorem \ref{thm:attrzero}, it follows that there exists $n_0$ sufficiently large such that 
		$\inf_{[t_0-\epsilon,t_0+\epsilon]} |P_{n_0}\bu(t)| \ge \frac12 \delta $. One can in fact show that if  $\mbf 0 \notin \mA_\nu$, then there exists $\delta > 0$  and $t_0 >0$  such that 
		$\inf_{[t_0,\infty)} |P_{n_0}\bu(t)| \ge  \delta $, where $n_0$ depends only on the Grashoff number and the distance  of $\mbf 0$ from the attractor.
	\end{remark} 

\section{Algorithm for Solving the Inverse Problem}\label{sec:algorithm}
	So far we have shown the validity of Problem~\ref{pbl:main-optimization} as a means of parameter recovery. We now present a method of computing its solution.
	In \cite{carlson2018}, the authors proposed the following update for the approximate viscosity, $\gamma$, as part of an algorithm to recover the true viscosity, $\nu$, using the available data only: 
	\begin{equation}\label{eqn:CHL-viscosity-update}
		\nu \approx \gamma + \frac{
			\frac12\frac1{t-s}\left(|\Proj{\bw(t)}|^2 - |\Proj{\bw(s)}|^2\right) + \mu\avg{|\Proj{\bw}|^2} 
		} {\avg{\left<\Proj{A \Wmap{\gamma}{\Proj{\bu}}}, \Proj{\bw}\right>}},
	\end{equation}
	where $\Proj{\bw} = \Proj{\bu} - \Proj{\Wmap{\gamma}{\Proj{\bu}}}$.
	Rigorous conditions for the convergence of this algorithm were obtained in \cite{Martinez_2022}.
	
	We provide a proof for the convergence of a slightly modified version of this algorithm, which is presented in Algorithm~\ref{alg:viscosity-inference}. This new algorithm is based on the approach taken in Lemma~\ref{lem:viscosity-err-bound}. In comparison to \eqref{eqn:CHL-viscosity-update}, the new update formula (see the definition of $\Gamma$ in Algorithm~\ref{alg:viscosity-inference}) is computed using inner-products with the data, $\Proj \bu$, and the time derivative of the data, as opposed to the observable error, $\Proj \bu - \Proj \bw$. Convolutions are taken with the decaying exponential factor $\tau\mapsto e^{-\mu \tau}$ over a time interval, $[s,t]$; with $\mu$ fixed, we denote this operation by $\left<\cdot\right>_s^t$,
	\[ \avg{\phi} := \frac{1}{t-s}\int_s^t e^{-\mu(t - \tau)}\phi(\tau)\;d\tau, \quad \forall \phi \in C([s,t];\mathbb{R}). \]
	\begin{algorithm} 
		\caption{Viscosity inference from data using the determining map}\label{alg:viscosity-inference}
		\begin{algorithmic}[1]
			\Require $0 < \nu_0 < \nu_1$ such that \( \nu \in [\nu_0, \nu_1] \) \Comment{bounds for the unknown viscosity, $\nu$}
			\Require $N, \mu_0$ \Comment{both chosen sufficiently large (see Theorem~\ref{thm:convergence})}
			\Require $\Proj{\bu}(t) \quad \forall t \in \mathbb{R}$ \Comment{observations of the reference solution}
			\Require $\:s < t\:$  such that $\avg{|\Proj{\bu}|^2} > 0$
			\Require $\epsilon_1 \in (0,\nu_0)$ \Comment{the convergence factor (also limited by $N$)}
			\Require $\epsilon_2 > 0$ \Comment{a stopping tolerance for the viscosity error}
			\State $\gamma \gets \gamma_0$ \Comment{an initial guess for the true viscosity}
			\Repeat
				\State $\gamma_0 \gets \gamma$
				\State $\gamma \gets \Call{$\Gamma$}{\gamma}$
			\Until $|\gamma - \gamma_0| / |\nu_1 - \nu_0| \leq \epsilon_2 / (\nu_1 - \nu_0 + \epsilon_1)$
			\Comment{the stopping condition}
			\State \Return $\gamma$
			\Procedure{$\Gamma$}{$\gamma$}
				\State $\bv \gets \Wmap{\gamma}{\Proj{\bu}}$ 
				\Comment{compute the solution of the determining map}
				\State $\bpsi \gets \Proj\bu - \Proj\bv$ 
				\Comment{compute the observable velocity error}
				\State \( c_1 \gets \frac{1}{t-s}\left(\left<A^{-1}\bpsi(t), \Proj{\bu}(t)\right> - e^{-\mu(t-s)}\left<A^{-1}\bpsi(s),\Proj{\bu}(s)\right>\right) \)
				\State \( c_2 \gets - \avg{\left<A^{-1} \bpsi, \partial_t \Proj{\bu} \right>} \)
				\State \( c_3 \gets \gamma \avg{\left<\bpsi, \Proj{\bu}\right>} \)
				\State \Return \( \gamma - \frac{c_1 + c_2 + c_3}{\avg{|\Proj{\bu}|^2}} \) 
				\Comment{return a new approximation for the viscosity}
			\EndProcedure
		\end{algorithmic}
	\end{algorithm} 
	The benefit of this new update formula is that the condition for the denominator to be nonzero is an \emph{a priori} condition depending on the observations of the reference solution only, as opposed to an \emph{a posteriori} condition which depends on the observable error, and so cannot be verified until the update is computed.

	Conditions for the convergence of Algorithm~\ref{alg:viscosity-inference} are given in the following theorem.
	\begin{theorem}\label{thm:convergence} 
		Let $\bu$ be a solution of \eqref{sys:Navier--Stokes} with viscosity $\nu \in [\nu_0, \nu_1]$,
		and suppose there exists an $n > 1$ and a time interval $[s,t]$ over which 
		\[ \avg{|\Proj[n]{\bu}|^2} > 0. \]
		Let $\epsilon_1 \in (0, \nu_0)$ and fix $\mu_0 \geq \frac{\nu_1 + \epsilon_1}{\nu_0 - \epsilon_1}.$ 
		If
		\begin{equation*}\label{cnd:N-contraction}
			N > \max\left\{
				\frac{8}{\nu_0\kappa_0} M_V( \bu),\: 
				n,\: 
				\frac{\nu_1 - \nu_0 + \epsilon_1}{\epsilon_1}\frac{M_1}{\avg{|\Proj{\bu}|^2}}
			\right\},
		\end{equation*}
		where 
		\( M_1 := \frac{ 2 \sqrt{5} c_0^2 }{ \nu_0 \kappa_0^2 }\; M_H(\bu)\,(M_V(\bu))^2, \)
		then, with $\Gamma$ as defined in Algorithm~\ref{alg:viscosity-inference}, 
		for any choice of $\gamma \in [\nu_0 - \epsilon_1, \nu_1 + \epsilon_1]$, 		
		\[ 
			|\nu -\Gamma(\gamma)| 
			< \frac{\epsilon_1}{\nu_1 - \nu_0 + \epsilon_1} |\nu - \gamma| < |\nu - \gamma|,
		\]
		and \( \Gamma(\gamma) \in [\nu_0 - \epsilon_1, \nu_1 + \epsilon_1]. \)
		Therefore, $\Gamma$ can be applied iteratively, and the result converges to $\nu$.

		Furthermore, let $\epsilon_2 > 0$ be a tolerance for stopping. We can infer closeness to the true viscosity by examining the residuals:
		\[ \frac{|\Gamma^{k+1}(\gamma) - \Gamma^k(\gamma)|}{(\nu_1 - \nu_0)} \leq \frac{\epsilon_2}{\nu_1 - \nu_0 + \epsilon_1}
		\implies |\Gamma^k(\gamma) - \nu| \leq \epsilon_2. \]
	\end{theorem}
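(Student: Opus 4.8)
The plan is to recast $\Gamma$ as a \emph{quasi-Newton} map whose error is governed by an \emph{exact} identity, then read off a contraction estimate with explicit constant $q := \epsilon_1/(\nu_1-\nu_0+\epsilon_1) < 1$, verify that $[\nu_0-\epsilon_1,\nu_1+\epsilon_1]$ is forward-invariant, and finally deduce convergence and the residual bound by elementary fixed-point arguments. The heart of the matter is that the update $\Gamma$ was constructed precisely so that the integral identity \eqref{eqn:nonzero-loss-equation} from the proof of Lemma~\ref{lem:viscosity-err-bound} collapses onto the three quantities $c_1,c_2,c_3$ computed in Algorithm~\ref{alg:viscosity-inference}, leaving only the (small) nonlinear remainder as the error.

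First I would fix $\gamma \in [\nu_0-\epsilon_1,\nu_1+\epsilon_1]$, set $\bv = \Wmap{\gamma}{\Proj{\bu}}$ and $\bw = \bu - \bv$, and use Theorem~\ref{thm:determining-map} to identify $\bu = \Wmap{\nu}{\Proj{\bu}}$. Observe that the iteration may leave $[\nu_0,\nu_1]$, which is why the estimates must be run on the widened interval $[\nu_0-\epsilon_1,\nu_1+\epsilon_1]$; the hypothesis $\mu_0 \geq (\nu_1+\epsilon_1)/(\nu_0-\epsilon_1)$ is exactly what makes Corollary~\ref{cor:Lipschitz_wUpperBound} applicable there. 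Applying \eqref{eqn:nonzero-loss-equation} with $\gamma_1 = \nu$, $\gamma_2 = \gamma$, $\bv_1 = \bu$, $\bv_2 = \bv$ (so that the lemma's $\bpsi$ is $\Proj{\bu}$ and its $\Proj{\bw}$ is the observable error of the algorithm), and then dividing by $(t-s)$, the boundary term becomes $c_1$, while the two linear integrals become $-c_2$ and $-c_3$. Rearranging, all of these cancel against the definition $\Gamma(\gamma) = \gamma - (c_1+c_2+c_3)/\avg{|\Proj{\bu}|^2}$, and the only surviving term is the trilinear one, yielding the exact error formula
\[
	\nu - \Gamma(\gamma) = -\frac{\avg{\b{\bu}{\bw}{A^{-1}\Proj{\bu}} + \b{\bw}{\bv}{A^{-1}\Proj{\bu}}}}{\avg{|\Proj{\bu}|^2}}.
\]
The denominator is strictly positive because $N > n$ forces $\avg{|\Proj{\bu}|^2} \geq \avg{|\Proj[n]{\bu}|^2} > 0$; this is the advertised \emph{a priori} nonvanishing condition.

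Next I would bound the numerator using exactly the estimates already assembled for Lemma~\ref{lem:viscosity-err-bound}: the Ladyzhenskaya bound \eqref{eqn:Lady} and Lemma~\ref{lem:apriori-bounds} control the trilinear terms, while Corollary~\ref{cor:Lipschitz_wUpperBound} with $p=0$ supplies $\|\bw\|_{C_b(H)} \lesssim |\nu-\gamma|/(\nu_0\kappa_0 N)$. This gives $|\nu-\Gamma(\gamma)| \leq (M_1/N)\,|\nu-\gamma|/\avg{|\Proj{\bu}|^2}$, and the lower bound on $N$ turns the prefactor into $q$, so that $|\nu-\Gamma(\gamma)| < q\,|\nu-\gamma|$. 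Since $\nu \in [\nu_0,\nu_1]$ and $\gamma \in [\nu_0-\epsilon_1,\nu_1+\epsilon_1]$ give $|\nu-\gamma| \leq \nu_1-\nu_0+\epsilon_1$, the contraction yields $|\nu-\Gamma(\gamma)| < \epsilon_1$, whence $\Gamma(\gamma) \in [\nu_0-\epsilon_1,\nu_1+\epsilon_1]$; the interval is invariant and iterating gives $|\nu-\Gamma^k(\gamma)| < q^k|\nu-\gamma| \to 0$.

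For the residual criterion I would combine the triangle inequality $|\Gamma^k(\gamma)-\nu| \leq |\Gamma^{k+1}(\gamma)-\Gamma^k(\gamma)| + |\Gamma^{k+1}(\gamma)-\nu|$ with $|\Gamma^{k+1}(\gamma)-\nu| < q\,|\Gamma^k(\gamma)-\nu|$ to obtain $|\Gamma^k(\gamma)-\nu| < (1-q)^{-1}|\Gamma^{k+1}(\gamma)-\Gamma^k(\gamma)|$; since $1-q = (\nu_1-\nu_0)/(\nu_1-\nu_0+\epsilon_1)$, the stated stopping inequality delivers $|\Gamma^k(\gamma)-\nu| \leq \epsilon_2$ immediately. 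I expect the main obstacle to be the exact bookkeeping of the preceding paragraph's derivation — verifying that the algebra of \eqref{eqn:nonzero-loss-equation} collapses \emph{precisely} onto $c_1+c_2+c_3$ so that the residual is purely the $O(1/N)$ nonlinear term, and confirming that every hypothesis borrowed from Lemma~\ref{lem:apriori-bounds}, Corollary~\ref{cor:Lipschitz_wUpperBound}, and Theorem~\ref{thm:determining-map} survives the replacement of $[\nu_0,\nu_1]$ by the enlarged interval $[\nu_0-\epsilon_1,\nu_1+\epsilon_1]$.
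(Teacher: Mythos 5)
Your proposal is correct and follows essentially the same route as the paper's own proof: derive the exact error identity $\nu - \Gamma(\gamma) = -\avg{\b{\bu}{\bw}{A^{-1}\Proj{\bu}} + \b{\bw}{\bv}{A^{-1}\Proj{\bu}}}/\avg{|\Proj{\bu}|^2}$ from \eqref{eqn:nonzero-loss-equation}, bound the trilinear remainder via Corollary~\ref{cor:Lipschitz_wUpperBound} (with $p=0$) and the Ladyzhenskaya/a priori estimates to get the factor $M_1/(N\avg{|\Proj{\bu}|^2})$, identify the contraction constant $\delta = \epsilon_1/(\nu_1-\nu_0+\epsilon_1)$, verify forward-invariance of the widened interval, and obtain the stopping criterion by the same triangle-inequality argument. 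Your observation that the enlarged hypothesis on $\mu_0$ is what lets the Lipschitz machinery run on $[\nu_0-\epsilon_1,\nu_1+\epsilon_1]$ is a point the paper leaves implicit, but it does not change the structure of the argument.
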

	\begin{proof}
		Let $\bv = \Wmap{\gamma}{\Proj{\bu}}$, and let $\bw = \bu - \bv = \Wmap{\nu}{\Proj{\bu}} - \Wmap{\gamma}{\Proj{\bu}}$. 
		Proceeding as in Lemma~\ref{lem:viscosity-err-bound} up to equation \eqref{eqn:nonzero-loss-equation}, we obtain 
		\begin{multline*}
			\bbeta(t) - e^{-\mu(t - s)}\bbeta(s) + (\nu - \gamma) \avg{|\Proj{\bu}|^2}
			= \avg{\left<A^{-1}\Proj \bw, \partial_t \Proj{\bu}\right> 
			- \gamma \left<\Proj \bw, \Proj{\bu}\right> }
			\\
			-\avg{\b{\bu}{\bw}{A^{-1}\Proj{\bu}} + \b{\bw}{\bv}{A^{-1}\Proj{\bu}}} 
		\end{multline*}
		where \( \bbeta := \left<A^{-1}\Proj w, \Proj{\bu}\right>. \)
		Moving the first term on the right hand side over to the left, and dividing by the term multipying $(\nu - \gamma)$, we have
		\[
			\nu - \Gamma(\gamma) = - \frac{1}{\avg{|\Proj{\bu}|^2}} 
			\left( \avg{\b{\bu}{\bw}{A^{-1}\Proj{\bu}}} + \avg{\b{\bw}{\bv}{A^{-1}\Proj{\bu}}} \right)
		\]
		where $\Gamma(\gamma)$ is as defined in Algorithm~\ref{alg:viscosity-inference}.

		We then take the absolute value of both sides and estimate the remaining terms on the right hand side as was done in Lemma~\ref{lem:viscosity-err-bound}, and obtain
		\[ 
			|\nu - \Gamma(\gamma)| \leq \frac{1}{N} \frac{M_1}{\avg{|\Proj{\bu}|^2}} |\nu - \gamma|.
		\]

		Let $\delta > 0$. Taking $N > \frac{1}{\delta}\frac{M_1}{\avg{|\Proj{\bu}|^2}}$, we have
		\[ |\nu - \Gamma(\gamma)| \leq \delta |\nu - \gamma|. \]
		Choosing $\delta < 1$ ensures $\Gamma(\gamma)$ is closer to $\nu$ than $\gamma$, but we also need to ensure that $\Gamma(\gamma)$ satisfies the feasibility condition: $\Gamma(\gamma) \in [\nu_0 - \epsilon_1, \nu_1 + \epsilon_1]$. From the previous inequality and the feasibility condition imposed on $\gamma$, as well as the assumption $\nu \in [\nu_0, \nu_1]$, we have
		\[ \nu_0 - \delta (\nu_1 - \nu_0 + \epsilon_1) \leq \Gamma(\gamma) \leq \nu_1 + \delta (\nu_1 - \nu_0 + \epsilon_1). \] 
		Therefore, the feasibility condition for $\Gamma(\gamma)$ is satisfied with 
		\( \delta = \frac{\epsilon_1}{\nu_1 - \nu_0 + \epsilon_1}. \)
		This expression for $\delta$ is an increasing function of $\epsilon_1 \in (0,\nu_0)$, and so attains its max when $\epsilon_1  = \nu_0$. Therefore, $\delta = \frac{\epsilon_1}{\nu_1 - \nu_0 + \epsilon_1} < \frac{\nu_0}{\nu_1} < 1$, and iterating $\Gamma$ results in convergence to $\nu$:
		\[ |\Gamma^k(\gamma) - \nu| \leq \delta^k |\gamma - \nu| \to 0 \: \text{ as } \: k\to\infty. \]

		Now, for the stopping condition, note that
		\begin{multline*}
			|\Gamma^k(\gamma) - \nu|
			= |\Gamma^k(\gamma) - \Gamma^{k+1}(\gamma) + \Gamma^{k+1}(\gamma) - \nu|
			\leq |\Gamma^{k+1}(\gamma) - \Gamma^k(\gamma)| + |\Gamma^{k+1}(\gamma) - \nu|
		\end{multline*}
		so, 
		\begin{multline*}
			|\Gamma^{k+1}(\gamma) - \Gamma^k(\gamma)| 
			\geq |\Gamma^k(\gamma) - \nu| - | \Gamma^{k+1}(\gamma) - \nu | 
			\geq (1 - \delta) |\Gamma^k(\gamma) - \nu|.
		\end{multline*}
		Therefore, if after $k$ iterations we have $|\Gamma^k(\gamma) - \nu| > \epsilon_2$, then
		\[ |\Gamma^{k+1}(\gamma) - \Gamma^k(\gamma)| > (1 - \delta) \epsilon_2 = \frac{(\nu_1 - \nu_0)\epsilon_2}{\nu_1 - \nu_0 + \epsilon_1}. \]
		We get the stopping condition from the last statement by taking its contrapositive.
	\end{proof}

	In Theorem~\ref{thm:convergence}, we first chose the convergence factor $\epsilon_1$ and then chose $N$. However, we can instead let the data dictate the convergence factor; if
	\[ N > \frac{\nu_1}{\nu_0} \frac{M_1}{\avg{|\Proj{\bu}|^2}} \]
	then
	\[ \frac{(\nu_1 - \nu_0) M_1}{N \avg{|\Proj{\bu}|^2} -  M_1} < \nu_0, \]
	so we can choose $\epsilon_1$ such that
	\[ \nu_0 > \epsilon_1 \geq \frac{(\nu_1 - \nu_0) M_1}{N \avg{|\Proj{\bu}|^2} -  M_1}
		\implies |\nu -\Gamma(\gamma)| 
			< \frac{\epsilon_1}{\nu_1 - \nu_0 + \epsilon_1} |\nu - \gamma|
	.\]

	\verbose{
	\[ \frac{1}{N} \frac{M_1}{\avg{|\Proj{\bu}|^2}} \leq \frac{\epsilon_1}{\nu_1 - \nu_0 + \epsilon_1} \]
	\[ \frac{\nu_1 - \nu_0}{N} \frac{M_1}{\avg{|\Proj{\bu}|^2}} 
		+ \frac{\epsilon_1}{N} \frac{M_1}{\avg{|\Proj{\bu}|^2}} \leq \epsilon_1 \]
	\[ \frac{\frac{\nu_1 - \nu_0}{N} \frac{M_1}{\avg{|\Proj{\bu}|^2}}}{1 -  \frac{1}{N} \frac{M_1}{\avg{|\Proj{\bu}|^2}}} \leq \epsilon_1 \]
	\[ \frac{(\nu_1 - \nu_0) M_1}{N \avg{|\Proj{\bu}|^2} -  M_1} \leq \epsilon_1 \]
	\[ \frac{(\nu_1 - \nu_0) M_1}{N \avg{|\Proj{\bu}|^2} -  M_1} < \nu_0  \]
	\[ \nu_1 M_1 < \nu_0 N \avg{|\Proj{\bu}|^2} \]
	\[ N > \frac{\nu_1}{\nu_0} \frac{M_1}{\avg{|\Proj{\bu}|^2}} \]
	}

\section{Conclusions} 
	We extended the definition of the determining map to include viscosity as a variable. We then studied the inverse problem of determining the viscosity from the velocity data using the determining map, and found conditions for the well-posedness of the inverse problem, as well as conditions for the regularity of the inverse mapping, and defined an iterative algorithm for obtaining its solution. In future work, we plan to compare this algorithm with gradient based solvers applied to Problem~\ref{pbl:main-optimization}, leveraging the recent sensitivity results in \cite{Carlson_Larios_2021} for the analysis.

	One consequence of our results is that, on the attractor, viscosity is determined, just like all higher modes, from the low modes only. Therefore, we have extended the concept of determining modes to include a parameter of the equation as being determined from finitely many low modes. This also has consequences for how different attractors (indexed by the viscosity) can intersect, as no two solutions on the attractor can overlap for any interval of time without the viscosities and solutions being identical
    
	For simplicity of presentation, we chose to frame our results as well as Algorithm~\ref{alg:viscosity-inference} in terms of solutions on the attractor, which requires having data for the reference solution for all time and computing the solution of the determining map for all time. However, our results easily extend to the initial value problem, where the data are only required on a bounded interval of time, and the determining map is solved as an initial value problem with an arbitrary initial condition. Algorithm~\ref{alg:viscosity-inference} can also be modified as an ``on-the-fly'' algorithm, as was done in \cite{carlson2018}.


\begin{appendix} 
	\section{Proof of Ladyzhenskaya's inequality}
	For completeness, we provide a proof of Ladyzhenskaya's inequality, valid on the torus in 2D. 
	\begin{equation}
		\|\bphi\|_{L^4(\Omega)} \leq |\bphi|^{\frac12} \left(\tfrac{1}{L} |\bphi| + \|\bphi\|\right)^{\frac12}
	\end{equation}
	\begin{proof}
	Let \( \bphi \in C([0,L];\mathbb{R}) \) be periodic (i.e. \(\bphi(L) = \bphi(0)\) and $\bphi$ can be extended to $\mathbb{R}$ via the relation 
	\( \bphi(x + nL) := \bphi(x)\: \forall x\in[0,L], n\in \mathbb{N}\)). 
	Denote the mean of $\bphi$ by 
	\(\overline{\bphi} = \frac{1}{L} \int_0^L \bphi(x) dx.\)
	Then by the Mean Value Theorem, there is a point $x_0 \in [0,L]$ such that 
	\( \bphi(x_0) = \overline{\bphi}. \)	

	For any \(x \in [0,L]\), we have 
	\[ 	
		\frac{1}{2}\bphi^2(x) = \frac12 \overline{\bphi}^2 + \int_{x_0}^x \bphi(z) \bphi'(z) dz 
		\leq \frac12 \overline{\bphi}^2 + \int_{x_0}^x |\bphi(z)| |\bphi'(z)| dz, 
	\]
	and by periodicity,
	\[ 
		\frac{1}{2}\bphi^2(x) = \frac12 \overline{\bphi}^2 - \int_{x}^{x_0 + L} \bphi(z) \bphi'(z) dz 
		\leq \frac12 \overline{\bphi}^2 + \int_{x}^{x_0 + L} |\bphi(z)| |\bphi'(z)| dz, 
	\]
	so
	\[ 
		\bphi^2(x) \leq \overline{\bphi}^2 + \int_{x_0}^{x_0+L} |\bphi(z)| |\bphi'(z)| dz 
		= \overline{\bphi}^2 + \int_{0}^{L} |\bphi(z)| |\bphi'(z)| dz. 
	\]

	This easily extends to vector valued \( \bphi \in C([0,L];R^2): \) 
	\[
		|\bphi(x)|^2 = \bphi_1^2(x) + \bphi_2^2(x)
		\leq \overline{\bphi_1}^2 + \int_{0}^{L} |\bphi_1(z)| |\bphi_1'(z)| dz 
		+ \overline{\bphi_2}^2 + \int_{0}^{L} |\bphi_2(z)| |\bphi_2'(z)| dz,
	\]
	which we can simplify by applying \eqref{eqn:Jensen},
	\begin{multline*}
		\overline{\bphi_1}^2 + \overline{\bphi_2}^2 
		= \left( \frac{1}{L} \int_0^L \bphi_1(x) dx \right)^2 + \left(\frac{1}{L} \int_0^L \bphi_2(x) dx \right)^2 \\
		\leq \frac{1}{L} \int_0^L \bphi_1^2(x) dx + \frac{1}{L} \int_0^L \bphi_2^2(x) dx
		= \frac{1}{L} \int_0^L |\bphi(x)|^2 dx 
	\end{multline*}
	and Cauchy-Schwarz,
	\[ 
		|\bphi_1(z)| |\bphi_1'(z)|+ |\bphi_2(z)| |\bphi_2'(z)|
		= \begin{bmatrix} |\bphi_1(z)| \\ |\bphi_2(z)| \end{bmatrix} 
			\cdot \begin{bmatrix} |\bphi_1'(z)| \\ |\bphi_2'(z)| \end{bmatrix}
		\leq |\bphi(z)| |\bphi'(z)|
	\]
	to obtain
	\[
		|\bphi(x)|^2 \leq \frac{1}{L} \int_0^L |\bphi(z)|^2 dz
		+ \int_{0}^{L} |\bphi(z)| |\bphi'(z)| dz.
	\]
	
	Now, if \( \bphi \in C(\Omega; \mathbb{R}^2) \), 
	using the estimates for a one-dimensional domain, for any \( (x,y) \in \Omega, \) we have
	\begin{multline*}
		|\bphi(x,y)|^4 = |\bphi(x,y)|^2 |\bphi(x,y)|^2 \\
		\leq \left( \frac{1}{L} \int_0^L |\bphi(z, y)|^2 dz + \int_{0}^{L} |\bphi(z,y)| |\partial_1 \bphi(z,y)| dz \right)\\
		\left( \frac{1}{L} \int_0^L |\bphi(x, z)|^2 dz + \int_{0}^{L} |\bphi(x,z)| |\partial_2 \bphi(x,z)| dz \right),
	\end{multline*}
	so
	\begin{multline*}
		\|\bphi\|_{L^4(\Omega)}^4 = \int_0^L\int_0^L|\bphi(x,y)|^4\,dx\,dy \\
		\leq \int_0^L\left( \frac{1}{L} \int_0^L |\bphi(z, y)|^2 dz + \int_{0}^{L} |\bphi(z,y)| |\partial_1 \bphi(z,y)| dz \right)dy\\
		\int_0^L\left( \frac{1}{L} \int_0^L |\bphi(x, z)|^2 dz + \int_{0}^{L} |\bphi(x,z)| |\partial_2 \bphi(x,z)| dz \right)dx\\
		\leq \left( \frac{1}{L} |\bphi|^2 + |\bphi| |\partial_1 \bphi| \right)
		\left( \frac{1}{L} |\bphi|^2 + |\bphi| |\partial_2 \bphi| \right) \\
		\leq \left( \frac{1}{L} |\bphi|^2 + |\bphi| \|\bphi\| \right)
		\left( \frac{1}{L} |\bphi|^2 + |\bphi| \|\bphi\| \right),
	\end{multline*}
	where the second to last line follows from the Cauchy-Schwarz inequality.
	\end{proof}

\end{appendix}

\end{document}